\title{A Non-dissipative Reconstruction Scheme for the Compressible Euler Equations}
\author{Nina Aguillon}
\newtheorem{thm}{Theorem}[section]
\newtheorem{lemma}[thm]{Lemma}
\newtheorem{prop}[thm]{Proposition}
\theoremstyle{remark}
\newtheorem{rem}[thm]{Remark}
\newtheorem{example}{Example}[section]
\theoremstyle{definition}
\newtheorem{defi}[thm]{Definition}
\newcommand{\R}{\mathbb{R}}
\newcommand{\Z}{\mathbb{Z}}
\newcommand{\eps}{\epsilon}
\begin{document}
\maketitle

\begin{abstract}
 We present a finite volume scheme, first on the Burgers equations, then on the Euler equations, based on a conservative reconstruction of shocks inside each cells of the mesh. Its main features are the following:
 \begin{itemize}
 \item the scheme is exact whenever the initial datum is a pure shock, in the sense that the approximate solution is the exact solution averaged over the cells of the mesh;
 \item in general, the scheme has a very low numerical diffusion and the shocks have a width of one or two cells;
 \item no spurious oscillations in the momentum appear behind slowly moving shocks, which is not the case in most of the scheme developed so far;
\end{itemize}
We also present prospective result on the full Euler equations with energy. The wall heating phenomenon, which is an artificial elevation of the temperature when a shock reflects on a wall, is also drastically diminished.
\end{abstract}

\bigskip

\noindent  \textbf{Key phrases:} Shock capturing scheme; Euler equations; slowly moving shocks; wall heating phenomenon

\bigskip
\noindent  \textbf{2010 Mathematics Subject Classification:} 35L65, 35L67, 65M06.

\section*{Introduction}

When approaching the solution of hyperbolic system, each finite volume scheme introduces a small amount of dissipation. The main effect of this numerical viscosity is to smear out the discontinuities over several cells. But in some cases, even the simplest finite volume scheme, as Lax-Friedrichs's or Godunov's, fails to approach the solution in a satisfactory way. In~\cite{Q94}, Quirk gives a list of such failings. We recall two of them in the context of gas dynamics. The first one is the apparition of an artificial spike in the momentum, followed by spurious oscillations, when computing a slowly moving shock (i.e. a shock that travels slowly compared to the speed of sound). The second one is the wall heating phenomenon, which is a nonphysical spike on the internal energy when a shock reflects on a solid wall, or when two shocks interact. It is worth noticing that for the slowly moving shocks as for the shock reflections, higher order schemes can lead to less satisfactory results than the first order schemes.  In both those cases, the numerical viscosity is held responsible (see~\cite{JL96, AR97} and~\cite{Q94, N87}). Several fixes have been proposed, based on the idea of adding a sufficient amount of dissipation to the scheme, either ``by hand'' or by interlacing two different schemes, see for example~\cite{DM96}.

In this paper, we propose a finite volume scheme that is exact when the initial datum is a pure shock (or contact discontinuity). As a consequence, it approaches exactly slowly moving shocks, and no momentum spikes appear when computing a Riemann problem containing a slowly moving shock. The wall heating phenomenon seems to be eliminated as well. Another nice feature of our scheme is that it reconstructs perfectly isentropic compression. The numerical simulations show that our scheme is of order $2$ on Riemann problems containing only shocks, which is, up to our knowledge, a novelty. 

The main idea of our scheme is the following. Suppose that the initial datum corresponds to a pure shock located at an interface of the mesh. After one iteration in time, at time $\Delta t$, this shock has moved inside a cell. As finite volume scheme are designed to be conservative, the numerical solution will be average in that cell, creating an intermediate value that does not correspond to a pointwise value of the exact solution. Roughly speaking, our scheme consists in rebuilding the initial shock inside that cell. Then, the reconstructed shocks are advected during a time $\Delta t$ and the corresponding fluxes are computed. This type of scheme has been first introduced in~\cite{DL02} on scalar conservation law with an increasing convex flux , and then used to compute non classical shocks in~\cite{BCLL08}. A general framework for framework and a stability analysis can be found in~\cite{L12} and~\cite{L08}. Other types of discontinuous reconstruction has been applied to systems, and particularly to gas dynamics, in~\cite{AVCL08} and~\cite{LWM08}.

Our aim is to generalized this idea to a wider class of hyperbolic equations. In comparison with~\cite{L08}, we would first like to suppress the assumption of monotonicity on the flux, and allow waves to travel in every direction. We must carefully avoid waves interaction near an interface. This can be done in a very simple way with a moving mesh, where the interfaces are enough bend, so that every wave travel slower than the mesh. The Rusanov scheme and its higher order extensions can be interpreted this way. Second, we attempt to apply this method to system of conservation laws. The solution of the Riemann problem consists in the succession of several waves  (shocks, contact discontinuities or rarefactions) and it is a priori unclear how to reconstruct the solution. In every cell, we choose to reconstruct the solution as a single shock or contact discontinuity.

The paper is organized as follows. In the first section, we present the reconstruction scheme on a single conservation law with convex flux, typically the Burgers equation. We describe the three steps of the scheme: reconstruction, advection and march in time, and insist on the necessity to use a moving mesh to avoid wave interaction. In the second section, we extend this scheme to the barotropic Euler equations. The key point is to choose the wave that has to be reconstructed. We present a simple criterion that we believe is relevant and robust. The reconstruction scheme degenerates to Lax-Friedrichs scheme in smooth regions. We explain how to coupled it with higher order schemes to regain accuracy in those regions, and present various numerical tests. Remarkably, the spurious oscillations behind a slowly moving shock are completely eliminated. 
The third and last section is devoted to prospective results  on the Euler equations with energy. We present some simulations of shock reflections, where the wall heating phenomenon is eliminated. This results are encouraging and show that the reconstruction scheme do extend to the full Euler equation. However this simulations are done with a rather abrupt detection of the contact discontinuities, and could probably be improved with a smarter criterion to decide which wave should be reconstructed. In a forthcoming version of this paper, we will discuss the two dimensional extension on the shallow water equation.

\paragraph{Acknowledgments}
The author heartily thanks Frédéric Lagoutière for his enlightening advice and guidance during the achievement of this work.

\section{Presentation of the scheme on scalar conservation law}

We consider the Cauchy problem for a scalar conservation law
\begin{equation} \label{ScalarCP}
 \left\{
 \begin{array}{l}
 \partial_{t} u + \partial_{x} f(u)= 0 \  \text{ for } t \in \R_{+} \text{ and } x \in \R \\\
 u_{| t=0}=u^{0}. 
\end{array}
\right.
\end{equation}
where the flux $f$ is $\mathcal{C}^{1}(\R)$ and the initial data $u^{0}$ are in $L^{\infty}(\R)$. We are interested in entropy solution, i.e. weak solutions of~\eqref{ScalarCP} that verify the additional entropy inequality
\begin{equation} \label{EntropyIneq}
  \partial_{t} S(u) + \partial_{x} G(u) \leq 0 \  \text{ for } t \in \R_{+} \text{ and } x \in \R.
\end{equation}
The functions $(S,G)$ are an entropy-entropy flux pair: both $S$ and $G$ are $\mathcal{C}^{1}$, $S$ is convex and $G'=S' f'$. If $f$ and $S$ are strictly convex, then this solution is unique, as proven for example in~\cite{K70, GR91}.

Let us now consider a finite volume scheme for~\eqref{ScalarCP}. We denote by $t^{0}=0<t^{1}< \cdots < t^{n} < \cdots$ the successive times where the solution is computed, and by $\Delta t^{k}= t^{k+1}-t^{k}$ the corresponding time steps. We subdivide the space $\R$ in cells having all the same size $\Delta x$. We denote by $x_{j}= j \Delta x$ the centers of the cells and by $x_{j+1/2}= x_{j} + \frac{\Delta x}{2}$ their extremities. Finally, we denote by $u_{j}^{n}$ an approximation of the exact solution at time $t^{n}$, averaged on cell $j$:
$$u_{j}^{n} \approx \frac{1}{\Delta x} \int_{x_{j-1/2}}^{x_{j+1/2}} u(t^{n},x) dx.$$ 
Integrating~\eqref{ScalarCP} on the slab $[t^{n}, t^{n+1}] \times [x_{j-1/2}, x_{j+1/2}]$,  we obtain the family of finite volume schemes
\begin{equation} \label{FVS}
\left\{
\begin{array}{l}
  u_{j}^{n+1}= u_{j}^{n}- \frac{\Delta t^{n}}{\Delta x} \left( f_{j+1/2}^{n} - f_{j-1/2}^{n} \right), \ \ n \geq 0, \\[10pt]
  u_{j}^{0} = \frac{1}{\Delta x} \int_{x_{j-1/2}}^{x_{j+1/2}} u^{0}(x) dx,
\end{array}
\right.
\end{equation}
where the numerical flux $f_{j+1/2}^{n}$ is an approximation of $\int_{t^{n}}^{t^{n+1}} f(u(t, x_{j+1/2} )) dt$ that characterized the scheme. It remains to explain how the fluxes are chosen in our scheme. There is not much difference with the case of linear advection presented, in term of discontinuous reconstruction scheme, in~\cite{BCLL08}. 
The idea is depicted on Figure~\ref{FNotationsBurgers}. We consider that the value $u_{j}^{n}$ in cell $j$ is actually coming from the averaging of a shock between $u_{j-1}^{n}$ and $u_{j+1}^{n}$ located somewhere inside the cell. Starting from $u_{j}^{n}$ we can reconstruct this discontinuity. By conservation of mass, it should lie at a distance
$$d_{j}^{n}= \Delta x \frac{u_{j+1}^{n}-u_{j}^{n}}{u_{j+1}^{n}-u_{j-1}^{n}}$$
of the left interface. It falls inside the cell if and only if $u_{j}^{n}$ takes value between its neighbors $u_{j+1}^{n}$ and $u_{j-1}^{n}$. We do not want to reconstruct a shock that is not entropy satisfying. Therefore, as $f$ is convex, we accept the reconstruction only if $u_{j-1}^{n} \geq u_{j+1}^{n}$. 
\begin{psfrags} 
 \psfrag{u_j}{$u_{j}$}
 \psfrag{u_j+1}{$u_{j+1}$}
 \psfrag{u_j-1}{$u_{j-1}$}
 \psfrag{u_j,R}{$u_{j,R}$}
 \psfrag{u_j,L}{$u_{j,L}$}
 \psfrag{j}{$j$}
 \psfrag{j+1}{$j+1$}
 \psfrag{j-1}{$j-1$}
 \psfrag{d_j}{$d_{j}$}
 \psfrag{dx-d_j}{$\Delta x - d_{j}$}
 \begin{figure}[H]
 \centering
 \includegraphics[width=10cm]{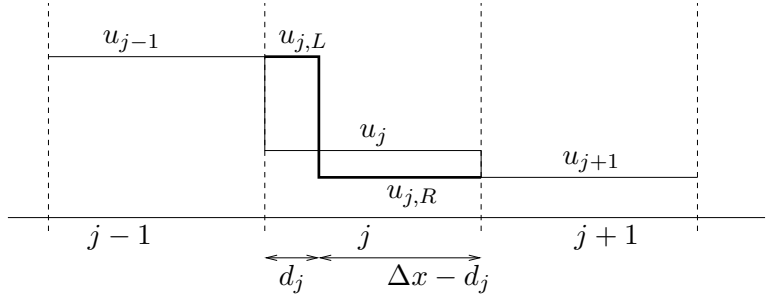}
 \caption{Conservative reconstruction of a shock.}  \label{FNotationsBurgers}
 \end{figure}
\end{psfrags}
We now let the reconstructed discontinuities evolve during a time $\Delta t$ and compute the fluxes at each interface. Suppose for a minute that $\min_{x \in \R} f'(u^{0}(x))>0$. Then every reconstructed shock travels to the right and, if two such shocks interact, the resulting waves will also travel to the right. As a consequence, if we perform a reconstruction in cell $j$, we know that the reconstructed shock will cross the right interface at time
$$ \Delta t^{j+1/2} = \frac{\Delta x - d_{j}^{n}}{ \sigma_{j}^{n}} $$
where $\sigma_{j}^{n}= \frac{f(u_{j-1}^{n})-f(u_{j+1}^{n})}{u_{j-1}^{n}-u_{j+1}^{n}}$ is the speed of the reconstructed shock. As everything is coming from the left, the flux passing through the $j+1/2$ interface is $f(u_{j+1}^{n})$ before the crossing time and $f(u_{j-1}^{n})$ afterwards (if the crossing time is smaller than the time step $\Delta t$). 

Suppose now that it exists a sonic point, i.e. that $f'(u^{0})$ changes sign. If $u^{0}$ is decreasing, we can reconstruct in every cell with entropy satisfying shocks. Then by convexity of $f$, $\sigma_{j}^{n}$ is bigger than $\sigma_{j+1}^{n}$ and it is possible that the two waves interact near their common interface in $x_{j+1/2}$, as depicted on Figure~\ref{FInteractionWaves}. Moreover, the resulting wave can travel in any direction, and it becomes impossible to compute the flux through this interface without solving the new Riemann problem created by the interaction of the waves. 
\begin{psfrags} 
 \psfrag{j}{$j$}
 \psfrag{j+1}{$j+1$}
  \psfrag{s_j}{$\sigma_{j}^{n}$}
 \psfrag{s_j+1}{$\sigma_{j+1}^{n}$}
 \begin{figure}[H] 
 \centering
 \includegraphics[width=10cm]{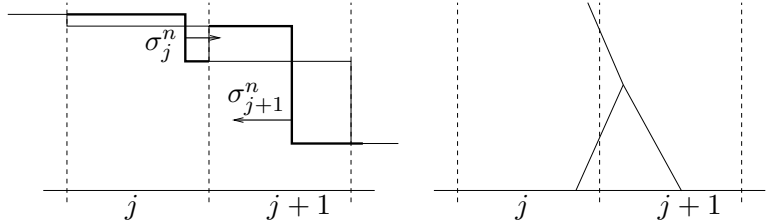}
 \caption{ Interaction of waves near an interface. On the left, the bold lines represent the reconstruction and the arrows the speed of the shocks. On the left, the auto similar waves created by the reconstructed shocks are drew in the $(x,t)$ plane.} \label{FInteractionWaves}
 \end{figure}
 \end{psfrags}
To overcome this difficulty we use a moving mesh. The property of propagation at finite speed states that the speed of waves are smaller than $V_{\text{waves}} = \max_{j \in \Z}| f'(u_{j})|$. We introduce a mesh speed $V_{\text{mesh}} \geq V_{\text{waves}}$ and bend the interface according to it. It follows that the shock reconstructed in cell $\mathcal{C}_{j}$  will cross the left interface. Moreover, even though two adjacent waves interact, the resulting waves will also travel slower than $V_{\text{mesh}}$. Therefore, they will not catch up with the interface, as depicted on Figure~\ref{FMovingMesh}. At the next time iteration, the mesh is bent in the other direction so that the space grid is unchanged after two time steps.
\begin{psfrags} 
 \begin{figure}[H] 
 \centering
 \includegraphics[width=10cm]{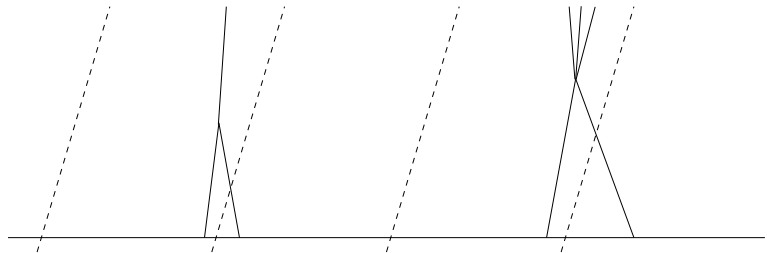}
 \caption{ A mesh with interfaces traveling faster than the maximum of wave speeds. Two interactions between adjacent waves occur, but the resulting perturbations of the solution do not cross the interfaces.} \label{FMovingMesh}
 \end{figure}
 \end{psfrags}
 This use of a moving mesh makes possible the computation of the flux without resolving any wave interaction. The very same idea is the key stone of the central schemes. Indeed, if $|V_{mesh}|>V_{waves}$, the evaluation of the solution of Riemann problems along the $x= V_{mesh}t$ becomes a trivial task. This is used in the Rusanov scheme, where $V_{mesh}$ is locally computed, and in its higher order extension initiated in~\cite{NT90} .
 
 \subsection{Reconstruction scheme}
 Summarizing the previous section, we split the reconstruction scheme into $3$ steps.
 
 \emph{Step $1$: Reconstruction}
 
\noindent We first try to reconstruct, for every $j\in \Z$, the value $u_{j}^{n}$ as the average of an entropy satisfying shock between $u_{j-1}^{n}$ and $u_{j+1}^{n}$. If it is not possible, nothing is done. In other words, we compute the distance to the left interface of the cell by conservation of mass
\begin{equation} \label{Distance}
d_{j}^{n}= \Delta x \frac{u_{j+1}^{n}-u_{j}^{n}}{u_{j+1}^{n}-u_{j-1}^{n}},
\end{equation}
and we accept the reconstruction if and only if it corresponds to an entropy satisfying shock lying inside the cell. We introduce the reconstructed states 
\begin{equation} \label{ReconstructedStates}
 \left\{
 \begin{array}{lcll}
 u_{j,L}^{n}= u_{j-1}^{n} &\text{ and } &u_{j,R}^{n}= u_{j+1}^{n}	& \text{ if } 0 < d_{j}^{n} < \Delta x \text{ and } u_{j-1}^{n}> u_{j+1}^{n}, \\
 u_{j,L}^{n}= u_{j}^{n} &\text{ and } &u_{j,R}^{n}= u_{j}^{n}, 		&\text{ otherwise } ,
\end{array}
\right. 
\end{equation}
and the speed associated to the reconstructed shock
\begin{equation} \label{BSpeed}
\sigma_{j}^{n}=
 \left\{
 \begin{array}{ll}
 \frac{f(u_{j,L})-f(u_{j,R})}{u_{j,L}-u_{j,R}}	& \text{ if } 0 < d_{j}^{n} < \Delta x \text{ and } u_{j-1}^{n}> u_{j+1}^{n}, \\
f'(u_{j})											&\text{ otherwise. } 
\end{array}
\right. 
\end{equation}

 \emph{Step $2$: Computation of the fluxes}
 
\noindent We suppose at time $t^{n}$,  the mesh speed $V_{\text{mesh}}^{n}$ is given such that 
\begin{equation} \label{meshspeed}
|V_{\text{mesh}}^{n}| \geq V_{\text{waves}} ^{n}= \max_{j \in \Z}| f'(u_{j}^{n})|.
\end{equation}
In practice, $V_{\text{mesh}}^{n}$ will change of sign at each iteration. The finite volume scheme is now obtained by integrating~\eqref{ScalarCP} on the slab 
$$\{ (t,x):  \  t^{n} \leq t \leq t^{n+1}, \  x_{j-1/2}^{n}+ t V_{\text{mesh}}^{n}  \leq x \leq  x_{j+1/2}^{n}+ t V_{\text{mesh}}^{n}   \} ,$$
where the extremities of the space cells are now defined from time to time by $x_{j+1/2}^{n+1}=x_{j+1/2}^{n}+ V_{\text{mesh}}^{n} \Delta t$. Therefore  $f_{j+1/2}^{n}$ is now an approximation of
$$ \int_{t^{n}}^{t^{n+1}} f(u(t, x_{j+1/2}^{n}+ V_{\text{mesh}}^{n} t))- V_{\text{mesh}}^{n} u(t, x_{j+1/2}^{n}+ V_{\text{mesh}}^{n} t) dt.$$
Our construction of the numerical fluxes is as follow. If $V_{\textrm{mesh}}^{n}<0$, the reconstructed shocks cross the right interface $x= x_{j+1/2}^{n}+V_{\text{mesh}}^{n} t$. The crossing time $\Delta t _{j+1/2}^{n+1/2}$ verifies
$$ x_{j-1/2}^{n}+ d_{j}+ \sigma_{j}^{n} \Delta t _{j+1/2}^{n+1/2}= x_{j+1/2}^{n}+V_{\textrm{mesh}}^{n} \Delta t _{j+1/2}^{n+1/2}, $$
and is therefore equals to
$$ \Delta t_{j+1/2}^{n+1/2}= \frac{\Delta x - d_{j}^{n}}{\sigma_{j}^{n}-V_{\text{mesh}}^{n}}. $$
Consequently, we set
\begin{equation} \label{flux1/2}
\begin{aligned}
 \Delta t f_{j+1/2}^{n}=  & \min(\Delta t, \Delta t_{j+1/2}^{n+1/2})  \left( f(u_{j,R}^{n}) - V_{\textrm{mesh}}^{n} u_{j,R}^{n} \right) \\
 & +  (\Delta t - \min(\Delta t, \Delta t_{j+1/2}^{n+1/2})) \left( f(u_{j,L}^{n}) - V_{\textrm{mesh}}^{n} u_{j,L}^{n} \right).
\end{aligned}
\end{equation}
If $V_{\textrm{mesh}}^{n}>0$, then the reconstructed shocks cross the left interface and we denote by
$$ \Delta t_{j-1/2}^{n+1/2}= \frac{d_{j}}{V_{\text{mesh}}^{n}-\sigma_{j}^{n}} $$
the crossing time. The flux is given by
\begin{equation} \label{flux-1/2}
\begin{aligned}
\Delta t f_{j-1/2}^{n}=  & \min(\Delta t, \Delta t _{j-1/2}^{n+1/2}) \left( f(u_{j,L}^{n})- V_{\textrm{mesh}}^{n} u_{j,L}^{n} \right) \\
 & +  (\Delta t - \min(\Delta t, \Delta t _{j-1/2}^{n+1/2})) \left( f(u_{j,R}^{n}) - V_{\textrm{mesh}}^{n} u_{j,R}^{n} \right).
\end{aligned}
\end{equation}

\emph{Step $3$: March in time}

\noindent We update compute $u$ at the next time by applying
 $$ u_{j}^{n+1}= u_{j}^{n}- \frac{\Delta t^{n}}{\Delta x} \left( f_{j+1/2}^{n} - f_{j-1/2}^{n} \right), \ \ n \geq 0 .$$

\begin{rem}
 When no reconstruction is performed, either because $u_{j-1}^{n}$ and $u_{j+1}^{n}$ are linked by a rarefaction wave, or because $d_{j}$ does not belong to $[0, \Delta x ]$, the value of $d_{j}$ does not matter. The flux degenerates toward a simple left or right decentering
 $$ f_{j+1/2}^{n}= 
 \left\{
 \begin{array}{ll}
 f(u_{j}^{n})- V_{\textrm{mesh}}^{n} u_{j}^{n} & \textrm{ if } V_{\textrm{mesh}}^{n}<0 \\
 f(u_{j+1}^{n})- V_{\textrm{mesh}}^{n} u_{j+1}^{n} & \textrm{ if } V_{\textrm{mesh}}^{n}>0
\end{array}
\right.
 $$
 which also corresponds to the fluxes of the staggered Lax-Friedrichs scheme, which is convergent under Hypothesis~\eqref{meshspeed}.
\end{rem}
\begin{prop}
 The reconstruction scheme is consistent: if $u_{j}^{n}=u_{j+1}^{n}$, then $f_{j+1/2}^{n}= f(u_{j}^{n})- V_{\text{mesh}}^{n} u_{j}^{n}$.
\end{prop}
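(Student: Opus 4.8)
The plan is to read off consistency directly from the structure of the flux formulas \eqref{flux1/2} and \eqref{flux-1/2}, once one observes that the equality of two neighboring cell values forces the relevant reconstruction to be rejected. I would split the argument according to the sign of $V_{\text{mesh}}^{n}$, since the flux at the interface $x_{j+1/2}^{n}$ is assembled from the reconstruction performed in cell $j$ when $V_{\text{mesh}}^{n}<0$ and from the reconstruction performed in cell $j+1$ (as its left interface) when $V_{\text{mesh}}^{n}>0$.

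First, suppose $V_{\text{mesh}}^{n}<0$, so that $f_{j+1/2}^{n}$ is given by \eqref{flux1/2} in terms of the reconstructed states $u_{j,L}^{n}$ and $u_{j,R}^{n}$. I would show that $u_{j}^{n}=u_{j+1}^{n}$ makes the reconstruction in cell $j$ impossible. Indeed, the numerator in \eqref{Distance} vanishes, $u_{j+1}^{n}-u_{j}^{n}=0$: either the entropy sign condition $u_{j-1}^{n}>u_{j+1}^{n}$ already fails, or it holds, in which case the denominator is nonzero and $d_{j}^{n}=0$ falls outside the open interval $(0,\Delta x)$. In both situations we land in the ``otherwise'' branch of \eqref{ReconstructedStates}, giving $u_{j,L}^{n}=u_{j,R}^{n}=u_{j}^{n}$. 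Substituting these equal states into \eqref{flux1/2}, the two terms carry the same factor $f(u_{j}^{n})-V_{\text{mesh}}^{n}u_{j}^{n}$, while their coefficients $\min(\Delta t,\Delta t_{j+1/2}^{n+1/2})$ and $\Delta t-\min(\Delta t,\Delta t_{j+1/2}^{n+1/2})$ add up to $\Delta t$; dividing by $\Delta t$ yields $f_{j+1/2}^{n}=f(u_{j}^{n})-V_{\text{mesh}}^{n}u_{j}^{n}$, regardless of the crossing time.

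For $V_{\text{mesh}}^{n}>0$ I would argue symmetrically. The flux $f_{j+1/2}^{n}$ is now the left-interface flux of cell $j+1$ given by \eqref{flux-1/2}, hence it depends on the reconstruction in cell $j+1$, whose distance is $d_{j+1}^{n}=\Delta x\,(u_{j+2}^{n}-u_{j+1}^{n})/(u_{j+2}^{n}-u_{j}^{n})$. Using $u_{j+1}^{n}=u_{j}^{n}$, the numerator and denominator coincide, so whenever the sign condition $u_{j}^{n}>u_{j+2}^{n}$ holds we get $d_{j+1}^{n}=\Delta x$, which fails the strict bound $d_{j+1}^{n}<\Delta x$; if the sign condition fails, the reconstruction is rejected outright. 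Either way $u_{j+1,L}^{n}=u_{j+1,R}^{n}=u_{j+1}^{n}=u_{j}^{n}$, and the same collapse of the two terms in \eqref{flux-1/2} delivers the claimed identity.

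I do not expect any genuine obstacle; this is a bookkeeping verification rather than a substantial proof. The only points deserving a line of care are identifying correctly which cell supplies the flux at $x_{j+1/2}^{n}$ as a function of the sign of $V_{\text{mesh}}^{n}$, and dismissing the borderline case where the denominator in \eqref{Distance} could vanish. The latter is harmless: in that configuration the reconstruction is rejected regardless, and, as already noted in the remark preceding the statement, the value of $d$ is then irrelevant. The degenerate value $V_{\text{mesh}}^{n}=0$, which by \eqref{meshspeed} can occur only when all wave speeds vanish, is covered by either of the two formulas and leads to the same conclusion.
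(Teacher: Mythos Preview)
Your argument is correct and follows the same route as the paper's proof, which is the single sentence ``Indeed, if $u_{j}^{n}=u_{j+1}^{n}$, no reconstruction is performed.'' You have simply unpacked that sentence: you split on the sign of $V_{\text{mesh}}^{n}$ to identify which cell's reconstruction feeds the flux at $x_{j+1/2}^{n}$, and then verify that $u_{j}^{n}=u_{j+1}^{n}$ forces $d_{j}^{n}=0$ (left case) or $d_{j+1}^{n}=\Delta x$ (right case), so the reconstruction is rejected and the flux collapses to the decentered expression already recorded in the remark preceding the proposition.
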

\begin{proof}
 Indeed, if $u_{j}^{n}=u_{j+1}^{n}$, no reconstruction is performed.
\end{proof}
The stability and convergence of reconstruction scheme for scalar conservation law has recently been studied in~\cite{L12}, where two extensions of the scheme inside rarefaction waves are also proposed.
\begin{rem}
The use of a staggered mesh imposes the CFL condition
 $$ \Delta t^{n} \leq \frac{\Delta x}{|V_{\text{mesh}}^{n}|+V_{\text{waves}}}. $$
\end{rem}

\begin{prop} \label{PureShock} When the initial data correspond to a pure shock, i.e. has the form
$$ u_{|t=0}(x) = u_{l} \mathbf{1}_{x<0} + u_{r} \mathbf{1}_{x>0}, $$
 with $u_{l}>u_{r}$, then the reconstruction scheme is exact, in the sense that
 $$ u_{j}^{n}= \frac{1}{\Delta x} \int_{x_{j-1/2}}^{x_{j+1/2}} u_{ \textrm{exa}}(n \Delta t,x) dx. $$
The function 
 $$u_{ \textrm{exa}}(t,x)=u_{l} \mathbf{1}_{x<\frac{f(u_{l})-f(u_{r})}{u_{l}-u_{r}}t} + u_{r} \mathbf{1}_{x>\frac{f(u_{l})-f(u_{r})}{u_{l}-u_{r}}t} $$ 
 is the exact solution.
\end{prop}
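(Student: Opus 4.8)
The plan is to argue by induction on the time index $n$, showing that at each time $t^n = n\Delta t$ the discrete solution $(u_j^n)_{j \in \Z}$ consists exactly of the cell averages of the single travelling shock $u_{\textrm{exa}}(t^n, \cdot)$. The base case $n=0$ is nothing but the definition of $u_j^0$ in~\eqref{FVS}. For the inductive step I would first translate the hypothesis into a concrete description of the discrete profile: since $u_{\textrm{exa}}(t^n,\cdot)$ equals $u_l$ left of $X^n := \sigma t^n$ and $u_r$ right of it, with $\sigma = (f(u_l)-f(u_r))/(u_l-u_r)$, every cell not meeting $X^n$ carries a pure average ($u_l$ or $u_r$), and at most one \emph{transition cell} $j_0$ (the one containing $X^n$) carries an intermediate value $u_{j_0}^n \in (u_r, u_l)$.

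The heart of the argument is to check that the reconstruction step recovers $u_{\textrm{exa}}(t^n,\cdot)$ not merely in average but pointwise. In the transition cell one has $u_{j_0-1}^n = u_l$ and $u_{j_0+1}^n = u_r$, because the neighbouring cells do not meet $X^n$; substituting these into~\eqref{Distance} gives $d_{j_0}^n = \Delta x\,(u_{j_0}^n - u_r)/(u_l - u_r)$, which is precisely the distance from the left interface to $X^n$ dictated by conservation of mass. Since $u_{j_0-1}^n = u_l > u_r = u_{j_0+1}^n$ and $0 < d_{j_0}^n < \Delta x$, the reconstruction is accepted, and~\eqref{ReconstructedStates}--\eqref{BSpeed} return exactly $u_{j_0,L}^n = u_l$, $u_{j_0,R}^n = u_r$ and $\sigma_{j_0}^n = \sigma$. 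In every other cell either $u_{j-1}^n = u_j^n = u_{j+1}^n$, so nothing is done, or, for the two cells adjacent to $j_0$, the distance~\eqref{Distance} degenerates to $0$ or $\Delta x$ and the reconstruction is rejected. Hence the reconstructed piecewise profile coincides pointwise with the exact shock $u_{\textrm{exa}}(t^n,\cdot)$, and it propagates at the exact speed $\sigma$.

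It then remains to see that the conservative flux and update steps advance this profile exactly. Because the reconstructed data \emph{is} the exact solution and the only discontinuity is the single shock, the CFL condition relative to the moving mesh, namely $|\sigma - V_{\textrm{mesh}}^n|\,\Delta t \leq (|V_{\textrm{mesh}}^n| + V_{\textrm{waves}})\,\Delta t \leq \Delta x$, guarantees that the shock crosses at most the one moving interface it is heading towards (the right one if $V_{\textrm{mesh}}^n < 0$, the left one if $V_{\textrm{mesh}}^n > 0$). The crossing time of Step~$2$ is then the exact instant at which $X^n + \sigma t$ meets the moving interface, so the flux formula~\eqref{flux1/2} (resp.~\eqref{flux-1/2}) evaluates $\int_{t^n}^{t^{n+1}} [\,f(u_{\textrm{exa}}) - V_{\textrm{mesh}}^n u_{\textrm{exa}}\,]\,dt$ along that interface without error. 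Plugging these exact space--time fluxes into the update of Step~$3$, which is exactly the integral form of~\eqref{ScalarCP} on the moving slab, yields $u_j^{n+1}$ equal to the average of $u_{\textrm{exa}}(t^{n+1},\cdot)$ on the updated cell, closing the induction.

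The delicate points are essentially of bookkeeping. I expect the main obstacle to be the careful verification that the crossing-time formula selects the correct interface and that the shock indeed sweeps only one cell per step, which is where the moving mesh and its CFL condition are essential; one must also treat separately the degenerate configuration in which $X^n$ falls exactly on an interface, so that there is no transition cell and the flux reduces to the staggered Lax--Friedrichs upwinding of the Remark, and reconcile the mesh interfaces $x_{j\pm 1/2}^n$ with the fixed interfaces appearing in the statement through the fact that the grid returns to its original position after two steps.
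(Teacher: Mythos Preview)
Your plan is correct and follows the same inductive skeleton as the paper: assume the discrete data at time $n$ are the exact cell averages of the travelling shock, check that the reconstruction recovers the shock in the unique transition cell $j_0$ and is rejected elsewhere, then verify the fluxes and update.

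The execution differs in one notable respect. The paper immediately simplifies to the case $u_l > u_r > 0$, sets $V_{\textrm{mesh}}^n = 0$, and then carries out the flux/update step by explicit algebra: it writes down $f_{j_0-1/2}^n$, $f_{j_0+1/2}^n$, $f_{j_0+3/2}^n$ and computes $u_{j_0}^{n+1}$ and $u_{j_0+1}^{n+1}$ by hand, splitting on whether the crossing time is below or above $\Delta t$. You instead keep the general moving mesh and argue conceptually that, once the reconstructed profile is pointwise equal to $u_{\textrm{exa}}(t^n,\cdot)$, formulas~\eqref{flux1/2}--\eqref{flux-1/2} compute the exact time integral of $f(u_{\textrm{exa}}) - V_{\textrm{mesh}} u_{\textrm{exa}}$ along the moving interfaces, so that the update, being the integral form of the conservation law, returns exact averages. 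This is cleaner and covers the case the paper sweeps under the rug, at the cost of the bookkeeping you flag yourself (checking the shock can only meet the interface it is heading towards, and the degenerate case $X^n$ on an interface). Your observation that in cells $j_0\pm 1$ the distance~\eqref{Distance} degenerates to $0$ or $\Delta x$ is exactly the mechanism the paper uses (phrased there as ``$u_{j_0-2}^n=u_{j_0-1}^n$ and $u_{j_0+1}^n=u_{j_0+2}^n$''). The only point to tidy up is the one you already identify: the statement is written with fixed interfaces $x_{j\pm 1/2}$, which the paper sidesteps by taking $V_{\textrm{mesh}}=0$; in the moving-mesh setting the identity should be read with $x_{j\pm 1/2}^n$ and recovers the stated form after every two steps.
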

\begin{proof} The scheme is exactly build to that purpose. For the sake of simplicity we suppose that $u_{l}>u_{r}>0$. Therefore for every $u \in [ u_{r}, u_{l}]$, every wave generated by a Riemann problem between $u_{l}$ an $u$ or between $u$ and $u_{r}$ has a positive speed and it is not necessary to use a moving mesh. We set $V_{ \textrm{mesh}}^{n}=0$ for all $n$. Let us proceed by induction.  Suppose that for some $n \geq 0$, for every $j \in \Z$,
$$ u_{j}^{n}= \frac{1}{\Delta x} \int_{x_{j-1/2}}^{x_{j+1/2}} u_{ \textrm{exa}}(n \Delta t,x) dx. $$
It is true for $n=0$ as soon as the initial sampling is done correctly. We denote by $j_{0}$ the cell where the shock is located and by $\delta \in [0, \Delta x]$ its distance to  $x_{j_{0}-1/2}$. We have
$$
\begin{cases}
 u_{j}^{n}= u_{l}& \mbox{ if $j<j_{0}$},  \\
 u_{j_{0}}^{n}= \frac{\delta}{\Delta x}u_{l}  + \frac{\Delta x -\delta}{\Delta x} u_{r}& \mbox{ if $j=j_{0}$},  \\
 u_{j}^{n}= u_{r} & \mbox{ if $j>j_{0}$}. \\
\end{cases}
$$
As a consequence, the scheme reconstructs the shock in cell $j_{0}$ with $d_{j_{0}}= \delta$, and it has  the correct speed 
$$\sigma_{ j_{0}}= \frac{f(u_{r})- f(u_{l})}{u_{r}-u_{l}}.$$ 
Moreover, as $u_{j_{0}-2}^{n}=u_{j_{0}-1}^{n}$ and $u_{j_{0}+1}^{n}= u_{j_{0}+2}^{n}$, no reconstruction is performed in cells $j_{0}-1$ and $j_{0}+1$, and
$$f_{j_{0}-1/2}^{n}= f(u_{l})= \frac{u_{l}^{2}}{2},$$
while
$$f_{j_{0}+3/2}^{n}= f(u_{r})= \frac{u_{r}^{2}}{2}.$$
We now compute the flux $f_{j_{0}+1/2}^{n}$. The reconstructed discontinuity crosses the right interface at time
$$ \Delta t_{j_{0}+1/2}^{n+1/2}= \frac{(\Delta x - \delta)}{\sigma_{j_{0}}}. $$
If $\Delta t_{j_{0}+1/2}$ is smaller than $\Delta t$, then Formula~\eqref{flux1/2} rewrites
$$
\begin{aligned}
 \Delta t f_{j_{0}+1/2}^{n} &= \Delta t_{j_{0}+1/2}^{n+1/2} f(u_{r}) + (\Delta t -  \Delta t_{j_{0}+1/2}^{n+1/2}) f(u_{l})  \\
 			&=  \frac{(\Delta x - \delta)}{\sigma_{j_{0}}}[f(u_{r})- f(u_{l}) ] + \Delta t f(u_{l}) \\
			&= \frac{(\Delta x - \delta) (u_{r}-u_{l})}{f(u_{r})-f(u_{l})} [f(u_{r})- f(u_{l}) ] + \Delta t f(u_{l}).
\end{aligned}
$$ 
We obtain
$$ f_{j_{0}+1/2}^{n} = \frac{\Delta x- \delta}{\Delta t} (u_{r}-u_{l}) + f(u_{l}), $$
and therefore we have
$$
\begin{aligned}
u_{j_{0}}^{n+1} &=u_{j_{0}}^{n+1} - \frac{\Delta t}{\Delta x}(f_{j_{0}+1/2}^{n}-f_{j_{0}-1/2}^{n}) \\
 			&= \frac{\delta}{\Delta x}u_{l}  + \frac{\Delta x -\delta}{\Delta x} u_{r}- \frac{\Delta t}{\Delta x} \left( \frac{\Delta x- \delta}{\Delta t} (u_{r}-u_{l}) + f(u_{l}) -f(u_{l}) \right) \\
			&=u_{l} 
\end{aligned}
$$
and, if we denote by $\delta '$ the quantity $\delta + \Delta t \sigma_{j_{0}}- \Delta x$,
$$
\begin{aligned}		
u_{j_{0}+1}^{n+1} 	&=u_{j_{0}+1}^{n+1} - \frac{\Delta t}{\Delta x}(f_{j_{0}+3/2}^{n}-f_{j_{0}+1/2}^{n}) \\
 				&= u_{r}- \frac{\Delta t}{\Delta x} \left( f(u_{r}) - \left[ \frac{\Delta x- \delta}{\Delta t} (u_{r}-u_{l}) + f(u_{l}) \right] \right) \\
				&= \frac{\delta '}{\Delta x} u_{l} +  \frac{\Delta x -\delta '}{\Delta x} u_{r}.
\end{aligned}
$$ 
The crossing time $ \Delta t_{j_{0}+1/2}^{n+1/2}$ is smaller than $\Delta t$ if and only if $ \delta + \sigma_{j_{0}} \Delta t$ is larger than $\Delta x$. Therefore, in that case at time $n+1$, the shock has moved inside the cell $j_{0}+1$ and its distance to the left interface is exactly $\delta '$. On the other hand, when $ \Delta t_{j_{0}+1/2}^{n+1/2} \geq 0$, the shock is still in cell $j_{0}$ at time $n+1$. Its distance to the left interface is $\delta '= \delta  + \Delta t \sigma_{j_{0}}$. In that case, the flux $f_{j_{0}+1/2}^{n}$ is simply $f(u_{r})$, and we easily obtain $u_{j_{0}+1}^{n+1}=u_{r}$ and
$$
\begin{aligned}
u_{j_{0}}^{n+1} &=u_{j_{0}}^{n+1} - \frac{\Delta t}{\Delta x}(f_{j_{0}+1/2}^{n}-f_{j_{0}-1/2}^{n}) \\
 			&= \frac{\delta}{\Delta x}u_{l}  + \frac{\Delta x -\delta}{\Delta x} u_{r}- \frac{\Delta t}{\Delta x}(f(u_{r}) -f(u_{l})) \\
			&= \frac{\delta '}{\Delta x} u_{l} +  \frac{\Delta x - \delta '}{\Delta x} u_{r}, 
\end{aligned}
$$
which concludes the proof.
\end{proof}

\subsection{Numerical test: isentropic compression}
We perform a numerical test that exhibit the non diffusive behavior of the reconstruction scheme. We consider the Burgers equation $f(u)= \frac{u^{2}}{2}$ with the initial data
$$ u^{0}(x)= 
\begin{cases}
 3 & \text{ if } x \leq -3; \\
 3-(x+3) & \text{ if } -3 \leq x \leq -1; \\
 1 & \text{ if } 1 \leq x.
\end{cases}
$$
The exact solution is given by
$$ u(t,x)= 
\begin{cases}
3* \mathbf{1}_{x \leq -3+3t} + \left( 3- \frac{x-(3+3t)}{(1-t)} \right) \mathbf{1}_{-3+3t < x < -1+t } +  \mathbf{1}_{ -1+t \leq x} & \text{ if } t<1 \\
3 * \mathbf{1}_{x \leq 2(t-1)} + \mathbf{1}_{2( t-1)< x} & \text{ if } t\geq1
\end{cases}
$$
\begin{figure}[H]
 \centering \includegraphics[width=14cm]{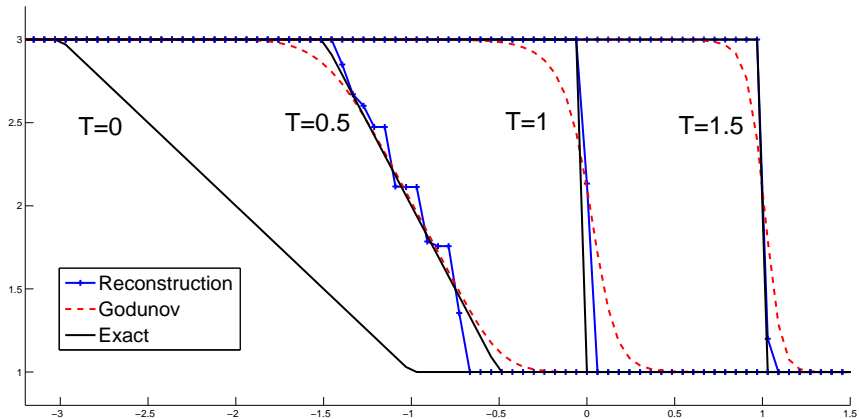}
 \caption{Simulation of an isentropic compression with the Godunov scheme and the reconstruction scheme} \label{FDetenteEnvers}
\end{figure}
We compare the Godunov scheme and the reconstruction scheme on the interval $[-4, 2]$ discretized with $100$ cells. The CFL number is $0.4$. As depicted on Figure~\ref{FDetenteEnvers}, the shock is perfectly reconstructed at time $T=1$ with the reconstruction scheme, while it is smeared out on $10$ cells by the Godunov's scheme. After time $T=1$, the shock is exactly advected by the reconstruction scheme, while the Godunov scheme's is still diffusing it. The apparition of steps inside the isentropic compression is very similar to those which appears in the smooth regions in~\cite{DL02} and~\cite{L12}. It does not affect the order of convergence of the scheme. Entropy fixes have been proposed in~\cite{L12}, and no such steps appear in~\cite{LWM08}

\section{Extension to the isothermal Euler equation}

\subsection{Adaptation of the reconstruction scheme}

This section is devoted to the generalization of the reconstruction scheme to a particular system of conservation laws, the isothermal Euler equations. It describes the evolution of an inviscid compressible gas, having density $\rho$ and velocity $u$, when the pressure law is $p=p(\rho)= c^{2} \rho$. Here, $c$ is the speed of sound. This pressure law to avoids us to deal with vacuum, but the scheme extend straightforwardly to pressure laws $p(\rho)= a \rho^{\gamma}$ with $a>0$ and $\gamma>1$ when there is no vacuum. The Cauchy problem for this system writes 
\begin{equation} \label{EulerEq}
 \begin{cases}
 \partial_{t} \rho + \partial_{x} (\rho u)= 0, \\
 \partial_{t} (\rho u) + \partial_{x} (\rho u^{2} + c^{2} \rho) =0, \\
 \rho_{| t=0}= \rho^{0} \text{ and } u_{| t=0}= u^{0}.
\end{cases}
\end{equation}
This system is strictly hyperbolic, the eigenvalues of the jacobian matrix being $u-c$ and $u+c$. The corresponding fields are genuinely nonlinear. This system has been widely studied. We are here interesting in the structure of the Riemann problem that can be found, for example, in~\cite{Evans}. We recall that a Riemann problem is~\eqref{EulerEq}, with initial data
$$ 
\begin{cases}
 \rho^{0}(x) &= \rho_{L} \mathbf{1}_{x<0}+\rho_{R} \mathbf{1}_{x>0}, \\
 u^{0}(x) &= u_{L} \mathbf{1}_{x<0}+ u_{R} \mathbf{1}_{x>0}.
\end{cases}
 $$
The solution consists of the succession of two waves, rarefactions or shocks, separated by an intermediate state. As a consequence, unless we start with very specific initial data, there is no reason for states $(\rho_{j-1}^{n}, u_{j-1}^{n})$ and $(\rho_{j+1}^{n}, u_{j+1}^{n})$ to be linked by an entropy satisfying shock, and the Riemann problem between these two states is more likely to develop the full pattern of two waves. How to decide for a reconstruction procedure in this framework? We decided to try to reconstruct the state $(\rho_{j}^{n}, u_{j}^{n})$ as the average of \emph{one of the waves} appearing in the Riemann problem between its neighbor states $(\rho_{j-1}^{n}, u_{j-1}^{n})$ and $(\rho_{j+1}^{n}, u_{j+1}^{n})$. This wave must be an entropy satisfying shock and must somehow be the prevailing wave in the Riemann problem. We use the following lemma to decide wether we try to reconstruct and which wave is chosen.
 \begin{lemma} \label{Detect}
 If $u_{L}>u_{R}$ and $\rho_{L}<\rho_{R}$, then the Riemann problem between $(\rho_{L},u_{L})$ and $(\rho_{R}, u_{R})$ contains a $1$-shock . If $u_{L}>u_{R}$ and $\rho_{L}>\rho_{R}$, it contains a $2$-shock.
\end{lemma}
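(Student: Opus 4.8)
The plan is to reduce both assertions to a single monotonicity argument about the Lax wave curves of the two genuinely nonlinear fields in the $(\rho,u)$ plane. Recall that the Riemann solution consists of a $1$-wave (associated with $\lambda_1=u-c$) joining the left state $(\rho_L,u_L)$ to an intermediate state $(\rho_*,u_*)$, followed by a $2$-wave (associated with $\lambda_2=u+c$) joining $(\rho_*,u_*)$ to $(\rho_R,u_R)$. The first step is to make the two wave curves explicit. Writing the Rankine--Hugoniot relations in the frame of the shock, one finds that the relative mass flux satisfies $j^{2}=c^{2}\rho_-\rho_+$, from which the Hugoniot locus reduces to $u_+-u_-=\mp c\bigl(\sqrt{\rho_+/\rho_-}-\sqrt{\rho_-/\rho_+}\bigr)$, the two signs corresponding to the two families; the rarefaction branches are in turn governed by the Riemann invariants $u+c\ln\rho$ (first family) and $u-c\ln\rho$ (second family).

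The second step is to apply the Lax entropy condition $\lambda_k(U_-)>s>\lambda_k(U_+)$ to select the admissible branches and read off their monotonicity. I would check that an admissible $1$-shock out of $(\rho_L,u_L)$ requires $\rho_*>\rho_L$ and then forces $u_*<u_L$, whereas the $1$-rarefaction has $\rho_*<\rho_L$ and $u_*>u_L$; hence the forward $1$-curve through $(\rho_L,u_L)$ is a strictly decreasing graph $u=\Phi_L(\rho)$, and the $1$-wave is a shock if and only if $\rho_*>\rho_L$. Symmetrically, an admissible $2$-shock into $(\rho_R,u_R)$ requires $\rho_*>\rho_R$ and forces $u_*>u_R$, while the $2$-rarefaction has $\rho_*<\rho_R$ and $u_*<u_R$; hence the backward $2$-curve through $(\rho_R,u_R)$ is a strictly increasing graph $u=\Psi_R(\rho)$, and the $2$-wave is a shock if and only if $\rho_*>\rho_R$. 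The intermediate state is the unique intersection $(\rho_*,u_*)$ of these two graphs, which for this pressure law always has positive density (no vacuum), as recalled in the text.

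With these facts the conclusion follows from the strict monotonicity of $h(\rho):=\Phi_L(\rho)-\Psi_R(\rho)$, which is strictly decreasing and vanishes at $\rho_*$. For the first assertion, $\rho_L<\rho_R$ and monotonicity of $\Psi_R$ give $\Psi_R(\rho_L)<\Psi_R(\rho_R)=u_R<u_L$, so that $h(\rho_L)=u_L-\Psi_R(\rho_L)>0=h(\rho_*)$, whence $\rho_*>\rho_L$ and the $1$-wave is a shock. For the second assertion, $\rho_R<\rho_L$ and monotonicity of $\Phi_L$ give $\Phi_L(\rho_R)>\Phi_L(\rho_L)=u_L>u_R$, so that $h(\rho_R)=\Phi_L(\rho_R)-u_R>0=h(\rho_*)$, whence $\rho_*>\rho_R$ and the $2$-wave is a shock. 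I expect the only delicate point to be the sign bookkeeping in the second step, namely pairing each sign of the Hugoniot relation with the correct family and extracting the admissible half through the Lax inequalities; once the two curves are known to be monotone with opposite slopes, the remainder is the one-line comparison above.
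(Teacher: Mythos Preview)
Your argument is correct. Both proofs rest on the same underlying monotonicity facts about the Lax curves, but the organization is genuinely different from the paper's.

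The paper argues in two quick qualitative strokes: first, since the velocity increases across either rarefaction, $u_L>u_R$ forces at least one of the two waves to be a shock; second, since the density decreases across $1$-rarefactions and $2$-shocks and increases across $1$-shocks and $2$-rarefactions, the sign of $\rho_R-\rho_L$ determines which family must be the shock (otherwise both waves would have to be rarefactions, contradicting the first step). No formulas or explicit wave curves are written down.

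Your route instead parametrizes both composite wave curves as graphs $u=\Phi_L(\rho)$ and $u=\Psi_R(\rho)$, establishes their opposite strict monotonicity, and localizes the intersection $\rho_*$ by evaluating the sign of $h=\Phi_L-\Psi_R$ at $\rho_L$ or $\rho_R$. This is more explicit and has the minor advantage of bypassing the intermediate ``at least one shock'' step: the hypothesis $u_L>u_R$ enters only once, to give the strict inequality $h(\rho_L)>0$ (resp.\ $h(\rho_R)>0$). The price is that you must verify the Hugoniot formulas and the Lax admissibility bookkeeping you flag at the end; the paper's argument avoids that computation entirely by taking the qualitative monotonicity of $\rho$ and $u$ along each wave type as known structural facts about the isothermal system.
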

\begin{proof}
 Let us denote by $(\rho_{I}, u_{I})$ the intermediate state of this Riemann problem. As the velocity increases through rarefaction waves, the Riemann problem contains at least one shock. Moreover, the density decreases through $1$-rarefaction waves and $2$-shocks, and increases through $1$-shocks and $2$-rarefaction waves. Hence, if $\rho_{L}<\rho_{R}$, the Riemann problem contains a $1$-shock, while if $\rho_{L}>\rho_{R}$, it contains a $2$-shock.
\end{proof}
The scheme is built as follow.

\emph{Step $1$: Reconstruction}

\noindent We decide wether we should try to reconstruct in cell $j$ using Lemma~\ref{Detect}. We denote by $(\rho_{j}^{*}, u_{j}^{*})$ the intermediate state in the Riemann problem between $(\rho_{j-1}^{n}, u_{j-1}^{n})$ and $(\rho_{j+1}^{n}, u_{j+1}^{n})$. The candidates for the reconstructed states are:
\begin{equation} \label{DesRecStateL}
 (\bar{\rho}_{j,L}^{n}, \bar{u}_{j,L}^{n})= 
\begin{cases}
 (\rho_{j-1}^{n}, u_{j-1}^{n}) & \text{ if } u_{j-1}^{n}>u_{j+1}^{n} \text{ and } \rho_{j-1}^{n}< \rho_{j+1}^{n}; \\
 (\rho_{j}^{*}, u_{j}^{*}) & \text{ if } u_{j-1}^{n}>u_{j+1}^{n} \text{ and } \rho_{j-1}^{n}> \rho_{j+1}^{n}; \\
 (\rho_{j}^{n}, u_{j}^{n}) & \text{ otherwise; } 
\end{cases}
\end{equation}
 and
 \begin{equation} \label{DesRecStateR}
 (\bar{\rho}_{j,R}^{n}, \bar{u}_{j,R}^{n})= 
\begin{cases}
 (\rho_{j}^{*}, u_{j}^{*}) & \text{ if } u_{j-1}^{n}>u_{j+1}^{n} \text{ and } \rho_{j-1}^{n}> \rho_{j+1}^{n}; \\
 (\rho_{j+1}^{n}, u_{j+1}^{n}) & \text{ if } u_{j-1}^{n}>u_{j+1}^{n} \text{ and } \rho_{j-1}^{n}< \rho_{j+1}^{n}; \\
 (\rho_{j}^{n}, u_{j}^{n}) & \text{ otherwise. } 
\end{cases}
\end{equation}
It corresponds respectively to a $1$-shock reconstruction, a $2$-shock reconstruction and no reconstruction. 
\begin{rem}
 If by extraordinary $(\rho_{j-1}^{n}, u_{j-1}^{n})$ and $(\rho_{j+1}^{n}, u_{j+1}^{n})$ were linked by a single shock, we will have
 $$ (\bar{\rho}_{j,L}^{n}, \bar{u}_{j,L}^{n})= (\rho_{j-1}^{n}, u_{j-1}^{n}) \ \text{ and } \  (\bar{\rho}_{j,R}^{n}, \bar{u}_{j,R}^{n})= (\rho_{j+1}^{n}, u_{j+1}^{n}).$$
 This is crucial to approach exactly pure shocks.
\end{rem}
We denote by $\sigma_{j}$ the associated speed of the shock, set arbitrarily to $0$ when no reconstruction is performed:
\begin{equation} \label{Speed}
\sigma_{j}= 
\begin{cases}
 u_{j-1}^{n} - c \sqrt{\frac{\rho_{j}^{*}}{\rho_{j-1}^{n}}}  & \text{ if } u_{j-1}^{n}>u_{j+1}^{n} \text{ and } \rho_{j-1}^{n}> \rho_{j+1}^{n}; \\
 u_{j+1}^{n} + c \sqrt{\frac{\rho_{j}^{*}}{\rho_{j+1}^{n}}} & \text{ if } u_{j-1}^{n}>u_{j+1}^{n} \text{ and } \rho_{j-1}^{n}< \rho_{j+1}^{n}; \\
 0 & \text{ otherwise; } 
\end{cases}
\end{equation}
We then compute the distances to the left interface. We now have two distances, one for the conservation law on the density $\rho$ and one for the conservation law on the momentum $q=\rho u$:
 \begin{equation} \label{Dist}
d_{j}^{n,\rho}= \Delta x \frac{\bar{\rho}_{j,R}^{n}-\bar{\rho}_{j}^{n}}{\bar{\rho}_{j,R}^{n}-\bar{\rho}_{j,L}^{n}} \ \text{ and } \ d_{j}^{n,q}= \Delta x \frac{\bar{q}_{j,R}^{n}-\bar{q}_{j}^{n}}{\bar{q}_{j,R}^{n}-\bar{q}_{j,L}^{n}} .
\end{equation}
We insists on the fact that even though we reconstruct a different discontinuity in $\rho$ and $q$, it has the same speed $\sigma_{j}^{n}$ in the two variables.
We must now decide whether the reconstruction is accepted or not. The most natural choice is to accept the reconstruction when $0<d_{j}^{n,\rho}< \Delta x$ and $0<d_{j}^{n,q}<\Delta x$, i.e. when we are able to place the discontinuity inside the cell, such that both $\rho$ and $q$ are conserved in the cell. This choice will be referred to as the \emph{fully conservative reconstruction}. However, we also consider the case where the reconstruction is accepted whenever $0<d_{j}^{n,\rho}< \Delta x$. No condition is required on $d_{j,n}^{q}$, which means that the reconstruction on the momentum can be nonconservative. This choice will be referred to as the \emph{half conservative reconstruction}. The numerical tests below will show the interest of this less severe choice.
\begin{defi} For the isothermal Euler equation~\eqref{EulerEq}, the left and right reconstructed states are:
\begin{itemize}
 \item for the fully conservative reconstruction,
 \begin{equation} \label{RecStates1}
 (\rho_{j,L}^{n}, u_{j,L}^{n}, \rho_{j,R}^{n}, u_{j,R}^{n}) = 
\begin{cases}
  (\bar{\rho}_{j,L}^{n}, \bar{u}_{j,L}^{n}, \bar{\rho}_{j,R}^{n}, \bar{u}_{j,R}^{n}) & \text{ if } 0<d_{j}^{n,\rho}< \Delta x \text{ and } 0<d_{j}^{n,q}< \Delta x; \\
  (\rho_{j}^{n}, u_{j}^{n}, \rho_{j}^{n}, u_{j}^{n}) & \text{ otherwise. } 
\end{cases}
\end{equation}
 \item for the half conservative reconstruction,
 \begin{equation} \label{RecStates2}
 (\rho_{j,L}^{n}, u_{j,L}^{n}, \rho_{j,R}^{n}, u_{j,R}^{n}) = 
\begin{cases}
  (\bar{\rho}_{j,L}^{n}, \bar{u}_{j,L}^{n}, \bar{\rho}_{j,R}^{n}, \bar{u}_{j,R}^{n}) & \text{ if } 0<d_{j}^{n,\rho}< \Delta x;  \\
  (\rho_{j}^{n}, u_{j}^{n}, \rho_{j}^{n}, u_{j}^{n}) & \text{ otherwise. } 
\end{cases}
\end{equation}
\end{itemize}
where $\bar{\rho}_{j,L}^{n}, \bar{u}_{j,L}^{n}, \bar{\rho}_{j,R}^{n}$ and $\bar{u}_{j,R}^{n}$ are defined in~(\ref{DesRecStateL},\ref{DesRecStateR}), and $d_{j}^{n,\rho}$, $d_{j}^{n,q}$ are defined in~\eqref{Dist}.
\end{defi}

\emph{Step $2$: Computation of the fluxes}

\noindent The fluxes are computed exactly as in the previous section, Equations~\eqref{flux1/2} and~\eqref{flux-1/2}. The only difference is that in general, $d_{j}^{n, \rho} \neq d_{j}^{n,q}$ and we now have two crossing times
$$
\begin{cases}
 \Delta t_{j+1/2}^{\rho} &=  \frac{\Delta x - d_{j}^{n, \rho}}{\sigma_{j}-V_{\text{mesh}}}; \\[10pt]
  \Delta t_{j+1/2}^{q} &=  \frac{\Delta x - d_{j}^{n, q}}{\sigma_{j}-V_{\text{mesh}}}; 
\end{cases}
\ \ \ \text{ or } \ \ \ 
\begin{cases}
 \Delta t_{j-1/2}^{\rho}= \frac{d_{j}^{n, \rho}}{V_{\text{mesh}}-\sigma_{j}};\\[10pt]
 \Delta t_{j-1/2}^{q}= \frac{d_{j}^{n, q}}{V_{\text{mesh}}-\sigma_{j}},
\end{cases}
 $$
for $V_{\text{mesh}}<0$ and $V_{\text{mesh}}>0$ respectively. The fluxes are given by
\begin{equation} \label{Eflux1/2}
\left\{
\begin{aligned}
 \Delta t f_{j+1/2}^{ n, \rho}=  & \min(\Delta t, \Delta t_{j+1/2}^{\rho})  \left( f^{\rho}(\rho_{j,R}^{n}, q_{j,R}^{n}) - V_{\textrm{mesh}} \rho_{j,R}^{n} \right) \\
 & +  (\Delta t - \min(\Delta t, \Delta t_{j+1/2}^{\rho})) \left( f^{\rho}(\rho_{j,L}^{n}, q_{j,L}^{n}) - V_{\textrm{mesh}} \rho_{j,L}^{n} \right); \\
  \Delta t f_{j+1/2}^{ n, q}=  & \min(\Delta t, \Delta t_{j+1/2}^{q})  \left( f^{q}(\rho_{j,R}^{n}, q_{j,R}^{n}) - V_{\textrm{mesh}} q_{j,R}^{n} \right) \\
 & +  (\Delta t - \min(\Delta t, \Delta t_{j+1/2}^{q})) \left( f^{q}(\rho_{j,L}^{n}, q_{j,L}^{n}) - V_{\textrm{mesh}} q_{j,L}^{n} \right),
\end{aligned}
\right.
\end{equation}
when $V_{\text{mesh}}$ is positive, and
\begin{equation} \label{Eflux-1/2}
\left\{
\begin{aligned}
\Delta t f_{j-1/2}^{n, \rho}=  & \min(\Delta t, \Delta t _{j-1/2}^{\rho}) \left( f^{\rho}(\rho_{j,L}^{n}, q_{j,L}^{n})- V_{\textrm{mesh}} \rho_{j,L}^{n} \right) \\
 & +  (\Delta t - \min(\Delta t, \Delta t _{j-1/2}^{\rho})) \left( f^{\rho}(\rho_{j,R}^{n}, q_{j,R}^{n}) - V_{\textrm{mesh}} \rho_{j,R}^{n} \right); \\
 \Delta t f_{j-1/2}^{n, q}=  & \min(\Delta t, \Delta t _{j-1/2}^{q}) \left( f^{q}(\rho_{j,L}^{n}, q_{j,L}^{n})- V_{\textrm{mesh}} q_{j,L}^{n} \right) \\
 & +  (\Delta t - \min(\Delta t, \Delta t _{j-1/2}^{q})) \left( f^{q}(\rho_{j,R}^{n}, q_{j,R}^{n}) - V_{\textrm{mesh}} q_{j,R}^{n} \right); 
\end{aligned}
\right.
\end{equation}
when $V_{\text{mesh}}$ is negative. Here, $f^{\rho}$ and $f^{q}$ denote the two components of the flux: $f^{\rho}(\rho, q)= q$ and $f^{q}(\rho, q)= \frac{q^{2}}{\rho} + c^{2} \rho$.

\vspace{3mm}

\emph{Step $3$: March in time}

\noindent Eventually, the conservative variables are updated to the next time step:
\begin{equation} 
\begin{cases}
 \rho_{j}^{n+1}= \rho_{j}^{n} - \frac{\Delta t}{\Delta x} ( f_{j+1/2}^{n,\rho} -  f_{j-1/2}^{n,\rho} ), \\
 q_{j}^{n+1}= q_{j}^{n} - \frac{\Delta t}{\Delta x} ( f_{j+1/2}^{n,q} -  f_{j-1/2}^{n,q} ) .
\end{cases}
\end{equation}

\begin{prop} \label{PureShock2}
 The reconstruction scheme is consistant, and exact on pure shocks. 
 \end{prop}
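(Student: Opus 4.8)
The plan is to treat the statement as two separate claims, each mirroring the scalar section. Consistency is the exact analogue of the scalar consistency proposition, while exactness on pure shocks is the system counterpart of Proposition~\ref{PureShock}; I would prove the latter by an induction on $n$ maintaining the invariant that $(\rho_j^n,q_j^n)$ is the exact cell-average of a single traveling discontinuity.

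For consistency I would argue as in the scalar case. Suppose $(\rho_j^n,q_j^n)=(\rho_{j+1}^n,q_{j+1}^n)$. The flux $f_{j+1/2}^n$ only involves the reconstruction of one adjacent cell (cell $j$ when $V_{\text{mesh}}<0$, cell $j+1$ when $V_{\text{mesh}}>0$). In either case the equality of the two neighbouring states forces the relevant distance in~\eqref{Dist} to an endpoint of $[0,\Delta x]$, namely $d=0$ on one side and $d=\Delta x$ on the other, so the acceptance test in~\eqref{RecStates1} (or~\eqref{RecStates2}) fails and no reconstruction is performed in that cell. The reconstructed left and right states then collapse to the common value $w=(\rho,q)$, and the flux formulas~\eqref{Eflux1/2}--\eqref{Eflux-1/2} reduce to the staggered Lax--Friedrichs flux $f(w)-V_{\text{mesh}}\,w$, which is the claimed identity.

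For exactness I would set up the induction as in Proposition~\ref{PureShock}. Take the initial datum to be a single entropy-satisfying shock of~\eqref{EulerEq}; by the $x\mapsto-x$, $u\mapsto-u$ symmetry of~\eqref{EulerEq} I may assume the shock is right-going, and, as in the scalar proof, set $V_{\text{mesh}}=0$. Assume that at time $t^n$ there is a single cell $j_0$, carrying the shock at distance $\delta$ from $x_{j_0-1/2}$, whose value is the convex combination $\frac{\delta}{\Delta x}(\rho_l,q_l)+\frac{\Delta x-\delta}{\Delta x}(\rho_r,q_r)$, while every cell to its left equals $(\rho_l,q_l)$ and every cell to its right equals $(\rho_r,q_r)$. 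The entropy shock satisfies $u_l>u_r$, together with $\rho_l<\rho_r$ for a $1$-shock and $\rho_l>\rho_r$ for a $2$-shock, so Lemma~\ref{Detect} flags cell $j_0$ with the correct wave type. Since the neighbours $(\rho_{j_0-1}^n,u_{j_0-1}^n)$ and $(\rho_{j_0+1}^n,u_{j_0+1}^n)$ are linked by this single shock, the intermediate state $(\rho_{j_0}^*,u_{j_0}^*)$ coincides with one of them; hence, by the remark following the definition of the reconstructed states, the reconstructed states are exactly $(\rho_l,q_l)$ and $(\rho_r,q_r)$, and the speed~\eqref{Speed} reduces to the Rankine--Hugoniot speed of the shock. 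I would then evaluate the two distances in~\eqref{Dist}: using $\rho_r-\rho_{j_0}^n=\frac{\delta}{\Delta x}(\rho_r-\rho_l)$ and the analogous identity for $q$, both $d_{j_0}^{n,\rho}$ and $d_{j_0}^{n,q}$ equal $\delta$, so the reconstruction is accepted under either criterion~\eqref{RecStates1},~\eqref{RecStates2}. Finally, the cells $j_0\pm1$ have equal states on their two sides, so by the consistency argument no reconstruction is triggered there and their fluxes are the exact physical fluxes $f(\rho_l,q_l)$ and $f(\rho_r,q_r)$.

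The heart of the matter, and the step I expect to demand the most care, is the identity $d_{j_0}^{n,\rho}=d_{j_0}^{n,q}=\delta$. This is what forces the two crossing times $\Delta t_{j+1/2}^\rho$ and $\Delta t_{j+1/2}^q$ to coincide and thereby decouples the vector flux~\eqref{Eflux1/2}--\eqref{Eflux-1/2} into two independent copies of the scalar computation carried out in Proposition~\ref{PureShock}; it holds precisely because an exact shock is conservative in $\rho$ and in $q$ at a common position with a common speed $\sigma_{j_0}$. Once this is established, the flux bookkeeping (according to whether the shock reaches the interface before or after $\Delta t$) and the march-in-time step are, component by component, identical to the scalar proof, and they advance the shock to its exact new position $\delta'$, closing the induction.
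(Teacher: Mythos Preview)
Your overall plan---induction on $n$, detection via Lemma~\ref{Detect}, the identity $d_{j_0}^{n,\rho}=d_{j_0}^{n,q}=\delta$, and then reusing the scalar flux computation componentwise---matches the paper. The gap is in the sentence ``the cells $j_0\pm1$ have equal states on their two sides, so by the consistency argument no reconstruction is triggered there.'' This is not true, and it is precisely the step where the system case differs from the scalar one.

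In cell $j_0-1$ the neighbours are $(\rho_l,q_l)$ on the left and the \emph{averaged} state $(\rho_{j_0}^n,q_{j_0}^n)$ on the right; in cell $j_0+1$ the neighbours are $(\rho_{j_0}^n,q_{j_0}^n)$ on the left and $(\rho_r,q_r)$ on the right. Neither cell has equal states on both sides. More importantly, in the system scheme the candidate reconstructed states in a cell are \emph{not} the two neighbours: one of them is the intermediate state of the Riemann problem between the neighbours (see~\eqref{DesRecStateL}--\eqref{DesRecStateR}). So the scalar trick ``$u_{j_0+1}^n=u_{j_0+2}^n$ forces $d_{j_0+1}^n=0$'' does not carry over. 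Concretely, for a $1$-shock, Lemma~\ref{Detect} again fires in cell $j_0+1$ (one first checks $u_l\geq u_{j_0}^n\geq u_r$, a computation the paper does explicitly), the right reconstructed density is $\rho_{j_0+1}^{*}$, the intermediate density of the Riemann problem between $(\rho_{j_0}^n,u_{j_0}^n)$ and $(\rho_r,u_r)$, and the distance becomes
\[
d_{j_0+1}^{n,\rho}=\Delta x\,\frac{\rho_{j_0+1}^{*}-\rho_r}{\rho_{j_0+1}^{*}-\rho_{j_0}^n}.
\]
If that Riemann problem happened to be a $1$-shock followed by a $2$-shock, one would have $\rho_{j_0+1}^{*}>\rho_r$ and $\rho_{j_0+1}^{*}>\rho_{j_0}^n$, hence $d_{j_0+1}^{n,\rho}\in(0,\Delta x)$ is possible and a spurious reconstruction could be accepted---destroying the induction. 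The paper rules this out by a nontrivial argument on the geometry of wave curves in the $(\rho,q)$ plane (the discussion around Figure~\ref{Fnoreconstruction}): because $(\rho_{j_0}^n,q_{j_0}^n)$ lies on the chord joining $(\rho_l,q_l)$ and $(\rho_r,q_r)$, its $1$-wave curve stays below that of $(\rho_l,q_l)$, which by concavity cannot meet the region $\{\rho>\rho_r,\ u>u_r\}$ where a $2$-shock intermediate state would have to sit. Only then does one conclude that the Riemann problem is a $1$-shock plus a $2$-rarefaction, so $\rho_{j_0+1}^{*}<\rho_r$ and $d_{j_0+1}^{n,\rho}<0$, blocking the reconstruction. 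Cell $j_0-1$ is easier (the left reconstructed density equals $\rho_l=\rho_{j_0-1}^n$, forcing $d_{j_0-1}^{n,\rho}=\Delta x$), but still not an instance of the consistency proposition. You should replace the ``equal on both sides'' shortcut by these two arguments.
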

\begin{proof} We proceed once again by induction, and focus on the case of a $1$-shock. We denote by $(\rho_{L}, q_{L})$ (resp. $(\rho_{R}, q_{R})$) the left (resp. the right) density and momentum. Suppose that at the $n$-th iteration, the scheme gave the exact average of the solution. Denote by $j_{0}$ the cell where the shock lies, and by $\delta$ its distance to the right interface of the $j_{0}$-th cell. In other words, we have
$$
\begin{cases}
 \rho_{j}^{n}= \rho_{L} & \mbox{ if $j<j_{0}$}, \\
 \rho_{j_{0}}^{n}= \frac{\delta}{\Delta x} \rho_{L}  + \frac{\Delta x -\delta}{\Delta x} \rho_{R}  & \mbox{ if $j=j_{0}$}, \\
 \rho_{j}^{n}= \rho_{R} & \mbox{ if $j>j_{0}$}, \\
\end{cases}
\ \ \text{ and } \ \
\begin{cases}
 q_{j}^{n}= q_{L} & \mbox{ if $j<j_{0}$}, \\
 q_{j_{0}}^{n}= \frac{\delta}{\Delta x} q_{L}  + \frac{\Delta x -\delta}{\Delta x} q_{R}  & \mbox{ if $j=j_{0}$}, \\
 q_{j}^{n}= q_{R} & \mbox{ if $j>j_{0}$}. \\
\end{cases}
$$
Let us first check that $u_{j_{0}-1}^{n} \geq u_{j_{0}}^{n} \geq u_{j_{0}+1}^{n}$. This insures that a $1$-shock is detected in cell $j_{0}$ with Lemma~\ref{Detect}. We denote by $\beta$ the quantity $\frac{\alpha}{\Delta x}$. We have
\begin{align*}
  u_{j_{0}} - u_{L} &= \frac{\beta q_{L} + (1-\beta) q_{R}}{\beta \rho_{L}+ (1-\beta)\rho_{R}} - \frac{q_{L}}{\rho_{L}} \\
  	& =\frac{\rho_{L}(\beta q_{L} + (1-\beta) q_{R})-q_{L}(\beta \rho_{L}+ (1-\beta)\rho_{R})}{\rho_{L}(\beta \rho_{L}+ (1-\beta)\rho_{R})} \\
	& =\frac{(1-\beta) (\rho_{L}q_{R}-\rho_{R} q_{L})}{\rho_{L}(\beta \rho_{L}+ (1-\beta)\rho_{R})} \\
	& =\frac{(1-\beta)\rho_{L} \rho_{R} (u_{R}-u_{L})}{\rho_{L}(\beta \rho_{L}+ (1-\beta)\rho_{R})} \\
	& \leq 0,
\end{align*}
and we obtain similarly that $u_{j_{0}}$ is bigger than $u_{R}$. Therefore the $1$-shock is detected, and with no change compared to the scalar case, its position and speed are correctly reconstructed. It remains to prove that no reconstruction is performed on cells $j_{0}-1$ and $j_{0}+1$. In both those cells, Lemma~\ref{Detect} detects a $1$-shock. Therefore in cell $j_{0}-1$, the left reconstructed density is $\rho_{L}$ and, by conservation of $\rho$, no reconstruction can be performed in this cell. In cell $j_{0}+1$, no reconstruction is performed if the Riemann problem between $(\rho_{j_{0}}, q_{j_{0}})$ and $(\rho_{R}, q_{R})$ contains a $1$-shock and a $2$-rarefaction. Indeed in that case, the density is increasing, and both the left and the right reconstructed densities are smaller than $\rho_{j_{0}+1}= \rho_{R}$, which blocks the reconstruction. Let us prove that this Riemann problem cannot contains a $1$-shock and a $2$-shock. If it was possible, the middle state $(\rho_{*},q_{*})$ would belong to the light gray region in Figure~\ref{Fnoreconstruction}. Indeed, we would have $u_{*}>u_{R}$ and $\rho_{*}>\rho_{R}$. 
On the other hand, $(\rho_{*}, q_{*})$ is also on the $1$-wave curve of $(\rho_{j_{0}}, q_{j_{0}})$. This state lies on the chord joining $(\rho_{L}, q_{L})$ and $(\rho_{R}, q_{R})$, and hence the $1$-wave curve  of $(\rho_{j_{0}}, q_{j_{0}})$ (dark gray curve in the Figure~\ref{Fnoreconstruction}) is below the $1$-wave curve  of $(\rho_{L}, q_{L})$. This last curve does not pass through the light gray area, because it is concave and its slope in $\rho_{R}$ is smaller than $u_{R}$, and we obtain a contradiction.
%
\begin{psfrags} 
 \psfrag{rL}{$(\rho_{L}, q_{L})$}
 \psfrag{rI}{$(\rho_{j_{0}}, q_{j_{0}})$}
 \psfrag{rR}{$(\rho_{R}, q_{R})$}
 \psfrag{r*}{$(\rho_{*}, q_{*})$}
 \psfrag{r}{$\rho$}
 \psfrag{q}{$q$}
 \begin{figure}[H]
 \centering
 \includegraphics[width=10cm]{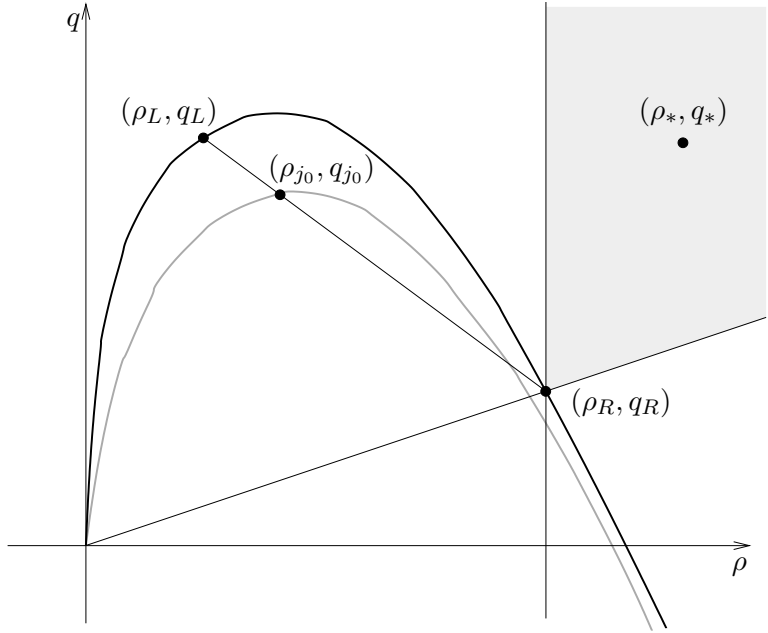}
 \caption{The Riemann problem between cells $j_{0}$ and $j_{0}+2$ cannot consist of a $1$-shock and a $2$-shock}  \label{Fnoreconstruction}
 \end{figure}
\end{psfrags}
Once we notice that at each time step, the two reconstructed distances $d_{j}^{n,\rho}$ and $d_{j}^{n,q}$ are equal, the remaining of the proof is identical to that of the scalar case.
 \end{proof}
 
 \subsection{Slowly moving shocks}
Many finite volume schemes fails to approach correctly shocks that move slowly compared to the speed of sound. Typically, a spike in the momentum appears in the first iterations in time, and is then, by conservation of the momentum, counterbalance by a hollow. The spike oscillates through time in an almost periodic manner, where the period corresponds to the 
 time that the shock needs to cross an entire cell. Therefore, even though the oscillations are diffused by the scheme, it is a constant source of error that is blamed for slow convergence to the steady state (see~\cite{N87} and~\cite{M94}). Moreover, higher order schemes tend to better preserve the spurious oscillations (that should not be here) better than first order scheme. This problem was first report by Colella and Woodward in~\cite{CW84}. In~\cite{R90} and~\cite{AR97}, numerous numerical tests and comparison between schemes are performed. Unlike all schemes tested in those papers, the half and fully conservative  reconstruction schemes are exact not only on steady shocks, but on all shocks (see Proposition~\ref{PureShock2}). This can be checked on the top of Figure~\ref{FSMS}, where the spurious oscillations created by \emph{all} the other schemes can be seen.  This good property is inherited when computing a Riemann problem containing a slowly moving shock with the half conservative reconstruction scheme, but lost with the fully conservative reconstruction scheme, as depicted on the bottom of Figure~\ref{FSMS}. Those simulations are run on a mesh with $200$ cells, with a CFL number of $0.45$. In both simulations, $\rho_{L}=1$ and $\rho_{R}=20$ and the speed of sound is $c=0.5$. 
\begin{figure}[H]
\centering
\includegraphics[width=16cm]{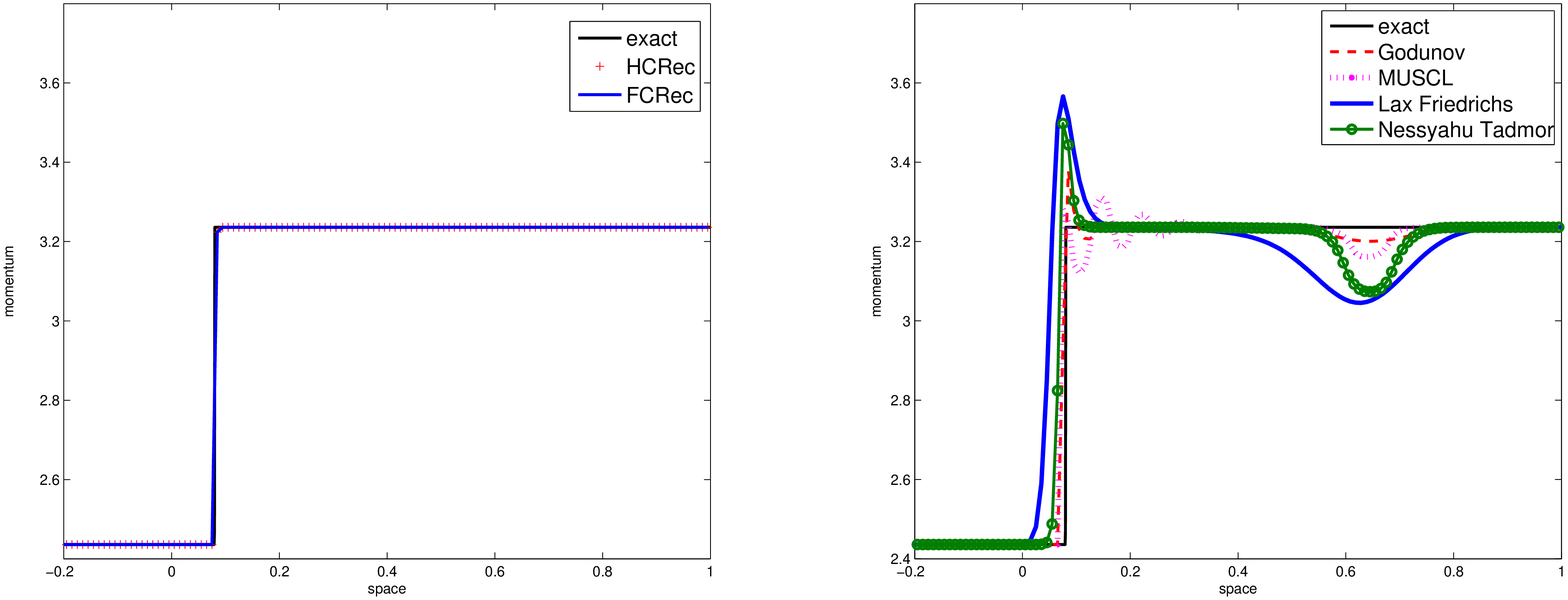}
\includegraphics[width=16cm]{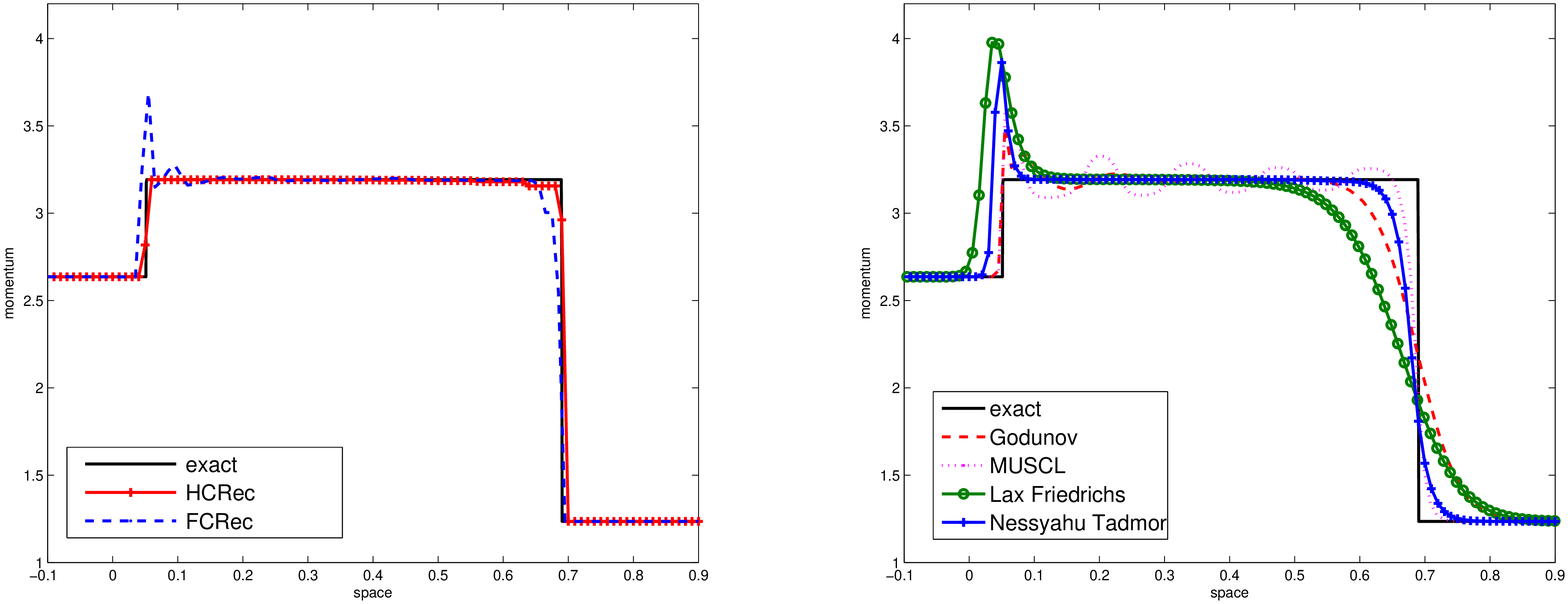}
\caption{Comparison of the half conservative (HCRec) and fully conservative (FCRec) reconstruction schemes with classical schemes on a unique slowly moving shock (top) and on a Riemann problem consisting of a slowly moving $1$-shock followed by a $2$-shock (bottom).}  \label{FSMS}
\end{figure}
The data on top of Figure~\ref{FSMS} are prepared to correspond to a $1$-shock moving at speed $0.1$: $u_{L}=0.1+ c*\sqrt{ \frac{\rho_{R}}{\rho_{L}}}$ and $u_{R}=0.1+ c*\sqrt{ \frac{\rho_{L}}{\rho_{R}}}$, which yields $q_{L} \approx 2.4361$ and $q_{R} \approx 3.2361$. It crosses a cell in approximately $6.25$ iterations. Inthe bottom of Figure~\ref{FSMS}, the momentum is modified: $q_{L} \approx 2.6361$ and $q_{R} \approx 1.2361$. When the initial data consist of a Riemann problem only containing shocks, even if one of them is a slowly moving one, the reconstruction scheme is of order $2$. This is illustrated on the left of Figure~\ref{FOrder}. On the right of this figure, we can see that the error is entirely due to the first iterations in time. Once the two shocks are separated, the shocks are better approximated, and no diffusion appears. The data are the same than in the bottom of Figure~\ref{FSMS}.
 \begin{figure}[H]
\centering
\includegraphics[width=7cm]{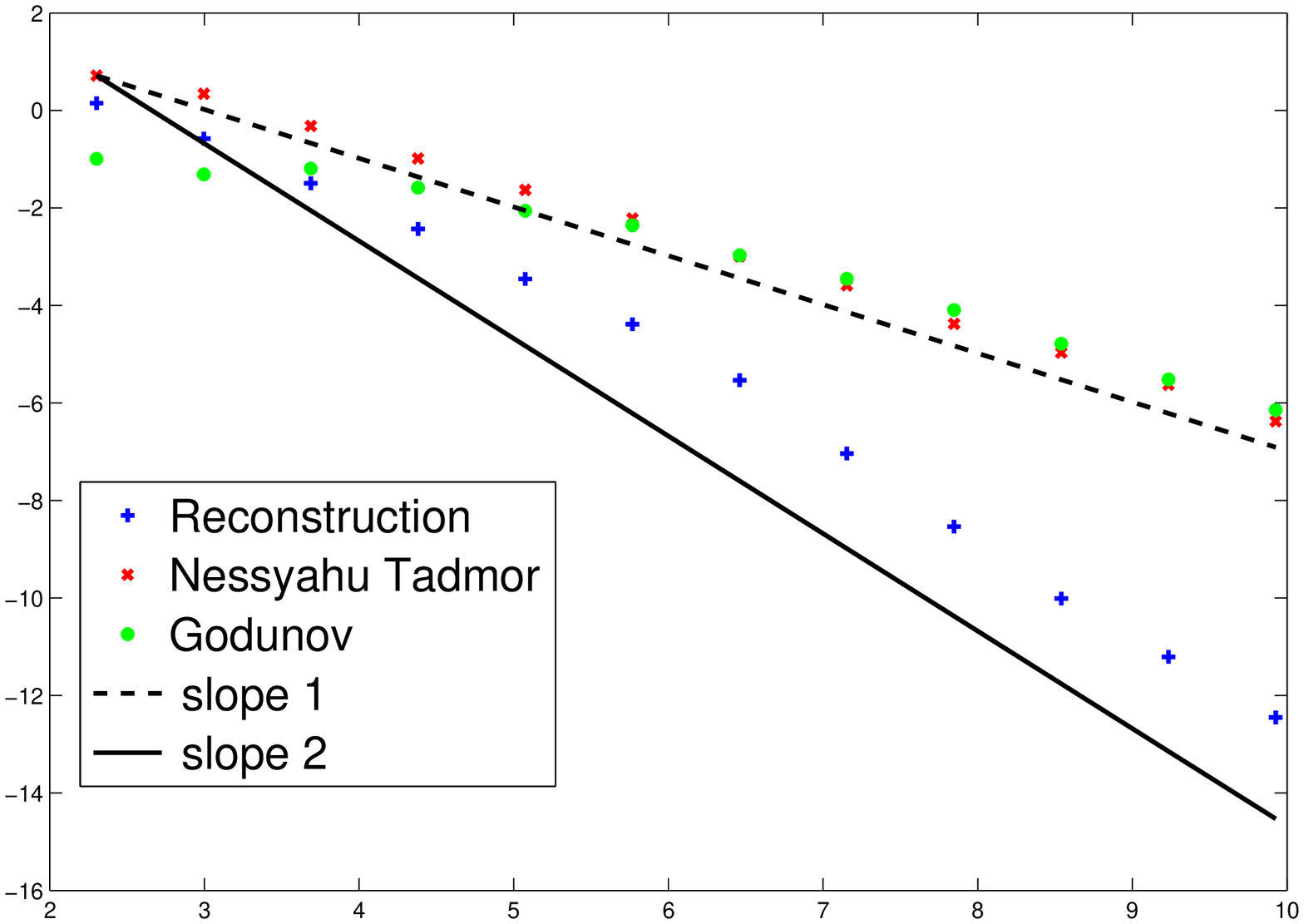}
\includegraphics[width=7cm]{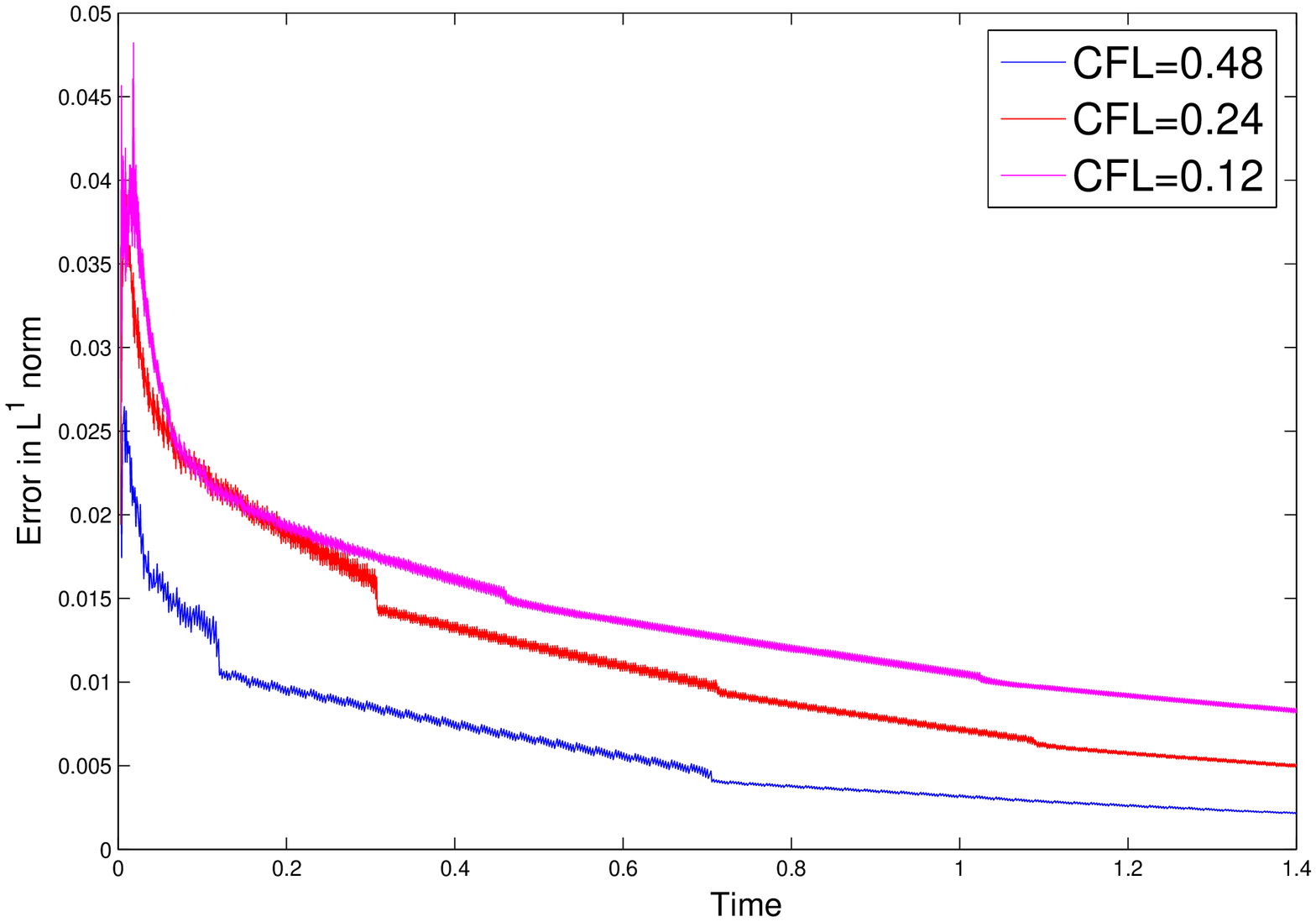}
\caption{Left: order of the half conservative reconstruction on the Riemann problem only containing shocks. Right: $L^{1}$ error through time for different CFL conditions, for a mesh of $500$ cells.}  \label{FOrder}
\end{figure}

An explanation of the superiority of the half conservative reconstruction scheme over the other schemes (including the fully conservative reconstruction scheme) might be linked to the study of slowly moving shocks by Jin and Liu in~\cite{JL96}. In this paper, they proved with a traveling wave analysis that a momentum spike appears in the viscous shock profile for the Euler equations with a linear viscosity
 $$ 
\begin{cases}
 \partial_{t} \rho + \partial_{x} (\rho u) = \eps \partial_{xx} \rho, \\
 \partial_{t}(\rho u) + \partial_{x} (\rho u^{2} + c^{2} \rho) = \eps \partial_{xx} q,
\end{cases}
 $$
 when the density has a monotonous profile. When the viscosity is Navier--Stokes-like, the momentum remains monotonous . We believe that the half conservative reconstruction scheme is the only one that has a numerical viscosity looking like the physical Navier-Stokes' viscosity. Indeed, as the reconstruction in the density is performed whenever it is possible (while there is an additional constraint on the momentum in the fully conservative reconstruction), the numerical viscosity is likely to be zero on the mass conservation law as often as possible. Another argument is that if the momentum spike appears in the fully conservative reconstruction scheme, it blocks the reconstruction and the scheme will later behave like the Lax-Friedrichs scheme, while the half conservative reconstruction scheme is more flexible, and will continue to reconstruct near the shock.

\subsection{Use of other schemes in the rarefaction waves}
When no shocks are detected, or when the reconstruction is not accepted, the flux degenerates toward the Lax-Friedrichs flux. As a consequence, it is very diffusive inside the rarefaction waves. A simple cure is to the replace, whenever it is chosen, the Lax-Friedrichs flux by a more accurate one. The fluxes write (for the half conservative scheme):
$$ f_{j+1/2}^{n} = 
\begin{cases}
 f_{j+1/2}^{n, REC} & \text{ if } ( 0 < d_{j}^{n} < dx \text{ and } V_{\text{mesh}}^{n}<0 ) \text{ or } ( 0 < d_{j+1}^{n} < dx \text{ and } V_{\text{mesh}}^{n}>0 ),  \\
 f_{j+1/2}^{n, NT} & \text{otherwise},
\end{cases} $$
where $f_{j+1/2}^{n,REC}$ is defined by~\eqref{Eflux1/2} and~\eqref{Eflux-1/2}, while $f_{j+1/2}^{n,NT}$ is another flux on a staggered grid.  In our implementation we use the simplest version of the Nessyahu and Tadmor scheme~\cite{NT90}. There is no Riemann problems to solve, so it almost does not impact the computation time. As expected, this coupled scheme behaves like the Nessyahu--Tadmor scheme in the rarefaction wave, and like the reconstruction scheme on the shock. This is illustrated by the Figure~\ref{FCoupling} below.
\begin{figure}[H]
\centering
\includegraphics[width=14cm]{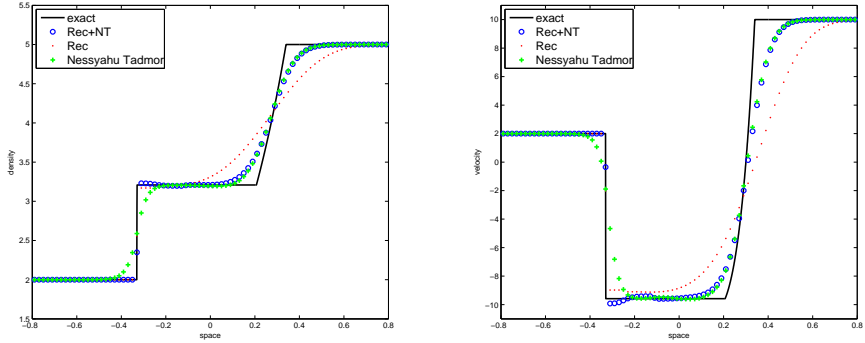}
\caption{Comparison of the reconstruction scheme (Rec), the Nessyahu--Tadmor scheme and the coupled scheme (Rec+NT) on a Riemann problem consisting of a $1$-shock and a $2$-rarefaction wave. The mesh has $100$ cells and the CFL number is $0.1$.}  \label{FCoupling}
\end{figure}

\section{Prospective results on the full Euler equations}

Let us now consider the full gas dynamics equation for an ideal gas:
$$ 
\begin{cases}
 \partial_{t} \rho + \partial_{x} (\rho u) = 0, \\
 \partial_{t}(\rho u) + \partial_{x} (\rho u^{2} + p )=0, \\
 \partial_{t} E + \partial_{x}(u(E+p))=0,
\end{cases}
$$
where $\rho$ is the density, $u$ is the fluid's velocity and $E$ is the total energy per unit volume. The pressure $p$ writes
$$ p = p(\rho, e) = e (\gamma-1) \rho, $$
where $e=\frac{E}{\rho}- \frac{1}{2} u^{2}$ is the specific internal energy and $\gamma$ is the ratio of specific heat. The complete solution of the Riemann problem can be found, for example, in~\cite{Toro}. The novelty is that the $2$-wave is a contact discontinuity. We denote by $U_{j}^{*, L/R}=(\rho_{j}^{*,L}, \rho_{j}^{*,R}, u_{j}^{*}, p_{j}^{*})$ the left and right density, velocity and pressure appearing around the contact discontinuity in the Riemann problem between $U_{j-1}=(\rho_{j-1}^{n}, u_{j-1}^{n}, p_{j-1}^{n})$ and  $U_{j+1}=(\rho_{j+1}^{n}, u_{j+1}^{n}, p_{j+1}^{n})$. Let us denote by $C_{CFL}$ the CFL number. The wave to be reconstruct is chosen as follow. 
\begin{itemize}
 \item If $u_{j-1}^{n} \geq u_{j+1}^{n}$, $\rho_{j-1}^{n} \leq \rho_{j+1}^{n}$, $p_{j-1}^{n} \leq  p_{j+1}^{n}$ and if
 $$ |\rho_{j-1}-\rho_{j}^{*,L}|> C_{CFL} \max( |\rho_{j}^{*,L}-\rho_{j}^{*,R}|,  |\rho_{j}^{*,R}-\rho_{j+1}|), $$
 we set the desired reconstructed states to be
 $$ \bar{U}_{j,L}= U_{j-1} \ \ \ \text{ and } \ \ \ \bar{U}_{j,R}= U_{j}^{*,L}. $$ 
 In other words, we will try to reconstruct a $1$-shock.
 \item If $u_{j-1}^{n} \geq u_{j+1}^{n}$, $\rho_{j-1}^{n} \geq \rho_{j+1}^{n}$, $p_{j-1}^{n} \geq  p_{j+1}^{n}$, and if
  $$ |\rho_{j}^{*,R}-\rho_{j+1}|> C_{CFL} \max( |\rho_{j}^{*,L}-\rho_{j}^{*,R}|,  |\rho_{j-1}-\rho_{j}^{*,L}|), $$
we set the desired reconstructed states to be
 $$ \bar{U}_{j,L}= U_{j}^{*,R} \ \ \ \text{ and } \ \ \ \bar{U}_{j,R}= U_{j+1}. $$ 
 In other words, we will try to reconstruct a $3$-shock.
 \item Otherwise, and if
  $$ |\rho_{j}^{*,L}-\rho_{j}^{*,R}|> C_{CFL} \max( |\rho_{j-1}-\rho_{j}^{*,L}|,  |\rho_{j}^{*,R}-\rho_{j+1}|), $$
  we set
  $$ \bar{U}_{j,L}= U_{j}^{*,L} \ \ \ \text{ and } \ \ \ \bar{U}_{j,R}= U_{j}^{*,R}. $$ 
and try to reconstruct as a $2$-contact discontinuity.
\end{itemize}
Then, we compute the distances $d_{j}^{\rho}$, $d_{j}^{q}$ and $d_{j}^{E}$ by conservation of mass, momentum and total energy inside the $j$-th cell. The reconstruction is accepted when the following conditions are fulfilled:
\begin{itemize}
 \item Both $d_{j}^{\rho}$ and $d_{j}^{E}$ are between $0$ and $\Delta x$;
 \item Both triplets $(\rho_{j-1}^{n}, \rho_{j}^{n}, \rho_{j+1}^{n})$ and $(u_{j-1}^{n}, \frac{1}{\Delta x} \int_{x_{j-1/2}^{n}}^{x_{j+1/2}^{n}} u_{rec}(x) dx, u_{j+1}^{n})$ are monotonous;
 \item Eventually, $e_{rec}$ remains positive on the cell.
\end{itemize}
Here, $u_{rec}$ and $e_{rec}$ are the piecewise constant reconstructed velocities and internal energy on the $j$-cell. The reconstruction scheme appears to be much more complicated in this case. This is due to the lack of criterion to detect a dominant contact discontinuity. Indeed, Lemma~\ref{Detect} (even with an additional test on the pressure) is still valid for this system as $p$ and $u$ remains constant through contact discontinuities. But it is not entirely satisfactory to decide which wave prevails. Indeed, plenty Riemann problems with a $1$-shock and a $2$-contact discontinuity verify the inequalities
$$u_{L}^{n} \geq u_{R}^{n}, \ \rho_{L}^{n} \leq \rho_{R}^{n} \ \ \ \text{ and } \ \ \ p_{L}^{n} \geq  p_{R}^{n}$$
even though the shock is small and the rarefaction is strong. This is why we added additional constraints like
$$ |\rho_{j-1}-\rho_{j}^{*,L}|> C_{CFL} \max( |\rho_{j}^{*,L}-\rho_{j}^{*,R}|,  |\rho_{j}^{*,R}-\rho_{j+1}|), $$
with is a criterion on the force of the wave. Note that with this criterion, some shocks are not detected, while without it, some contact discontinuities are detected as shocks. We observe numerically that using the CFL number tunes the scheme pretty well. Instead of this criterion, we also tried a ``two shots reconstruction'', in which we first try to reconstruct everywhere with contact discontinuities. If this reconstruction fails (i.e. if we do not have $0<d_{j}^{\rho}<\Delta x $ and $0<d_{j}^{E}<\Delta x $), then we attempt to reconstruct $1$- and $3$-shocks using Lemma~\ref{Detect}. The numerical results are very similar to those presented below.This very same difficulty to detect contact discontinuities led us to add more constraints to accept a reconstruction. The first one is similar to the previous $2\times 2$ system. We do not impose a conservation constraint on the momentum to mimic the Navier Stockes viscosity. The density and velocity are monotonous along the profile of a viscous shock, which justify the second constraint. Eventually, the last constraint forces the pressure to remain positive.

We perform various numerical tests to compare the reconstruction scheme with other classical schemes. On all these figures, \texttt{Rec} indicates the half conservative reconstruction scheme and \texttt{Rec+NT} indicates the half conservation reconstruction scheme coupled with the Nessyahu-Tadmor scheme. Theses schemes are compared with the Godunov and Rusanov schemes, and with the simplest version of the Nessyahu-Tadmor scheme~\cite{NT90} and the simplest version of the MUSCL scheme~\cite{VL79} 
\begin{example} On Figure~\ref{Fcase8}, we compare numerous schemes on a Riemann problem with 
$$
\begin{cases}
 \rho_{L}=5.99924 \ u_{L}=19.5975  \ \ \text{ and } \ \ \ p_{L}=460.894, \\
 \rho_{R}=5.99242 \  u_{R}=-6.19633  \ \ \text{ and } \ \ \ p_{R}=46.0950.
\end{cases}
 $$
The solution consists of three discontinuities moving to the right (cf the book of Toro~\cite{Toro}). We took a CFL number of $0.4$ and discretized the interval in $400$ cells. The shocks are very well computed, an we observe an improvement on the contact discontinuity. This improvement is often much better when the reconstruction scheme is coupled with a higher order scheme. 
 \begin{figure}[H]
\centering
\includegraphics[width=16cm]{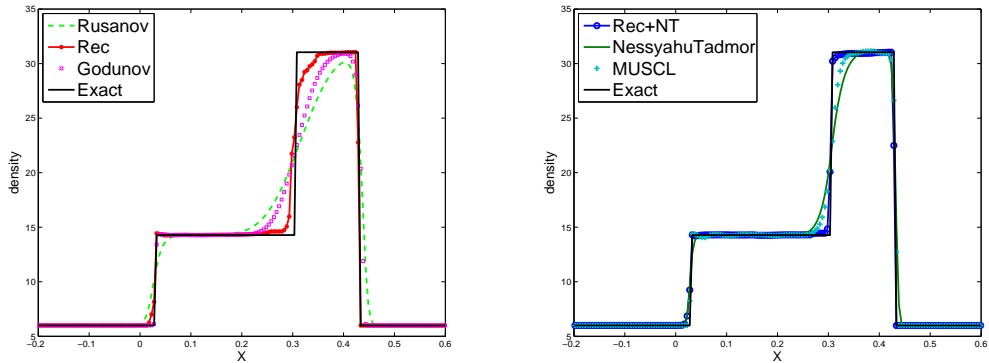}
\caption{Density at time $0.035$ for the first test case.}  \label{Fcase8}
\end{figure}
\end{example}

\begin{example}
 We compare those same schemes in the Colella and Woodward blast wave test case, introduced in~\cite{CW84}. The initial datum is
$$ 
\begin{cases}
 \rho^{0}(x)= 1; \\
u^{0}(x)=0;\\
p^{0}(x)= 1 \, 000* \mathbf{1}_{x<0.1}+0.01* \mathbf{1}_{0.1 <x < 0.9}+ 100*\mathbf{1}_{x\geq0.9} \, .
\end{cases}
$$
The solution is computed on the interval $[0,1]$, with reflective boundary conditions at the two extremities of the interval. The reference solution is obtained by running the Nessyahu Tadmor scheme on $30 \,  000$ cells. A more accurate reference solution can be found in~\cite{CW84}. On Figures~\ref{FCW1}, we used a mesh containing $400$ cells, a CFL number set to $0.45$ and the final time is $T=0.026$. We plot the density, velocity and internal energy. The non-dissipative feature of the reconstruction scheme is particularly obvious on this last plot. On Figure~\ref{FCW2}, the finite time is $T=0.038$. We took a CFL number of $0.48$ and $400$, $1 \, 200$ and $ 2 \, 000$ cells. Observe that even with few points, the discontinuities are sharply captured, even though no reconstruction is performed in the middle area, where we recover the behavior of Lax Friedriechs' or Nessayahu-Tadmor's schemes.
 \begin{figure}[H]
\centering
\includegraphics[width=16cm]{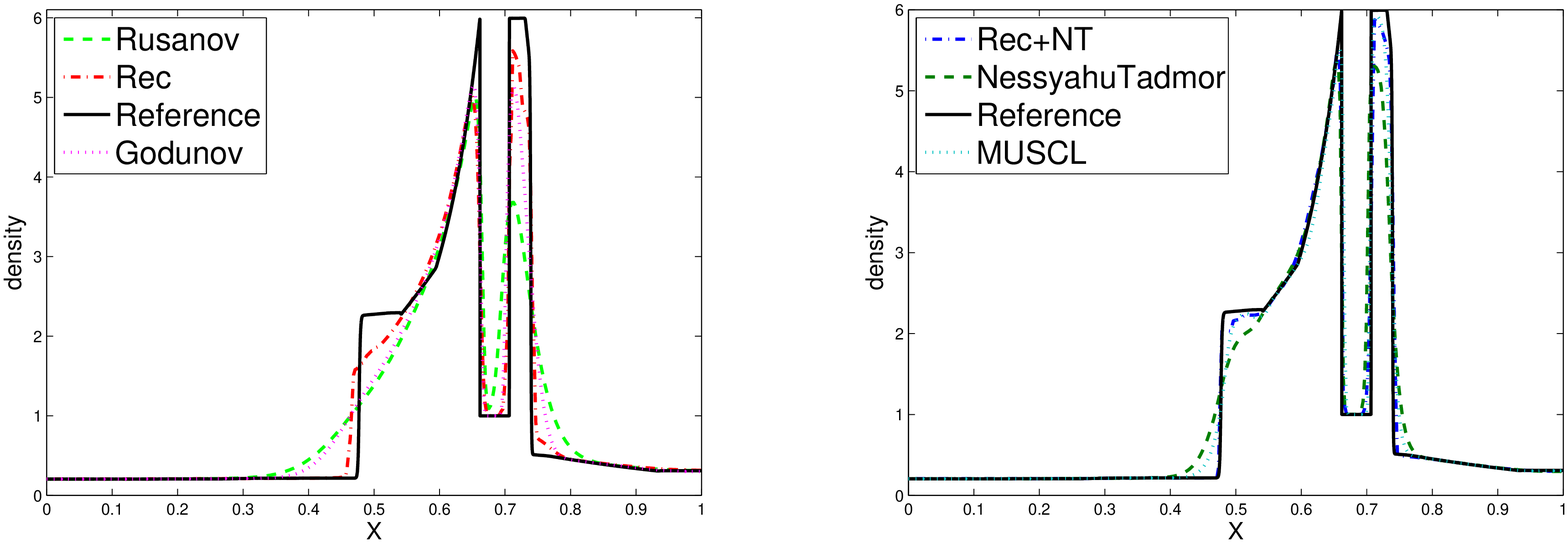}
\includegraphics[width=16cm]{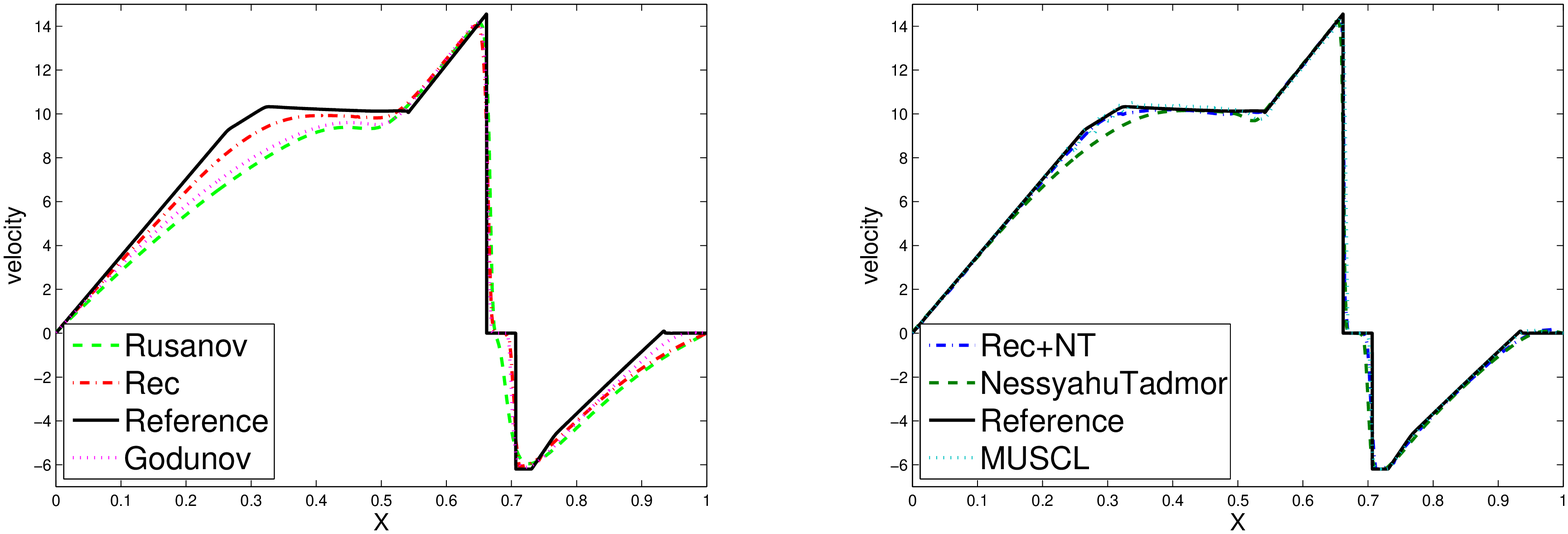}
\includegraphics[width=16cm]{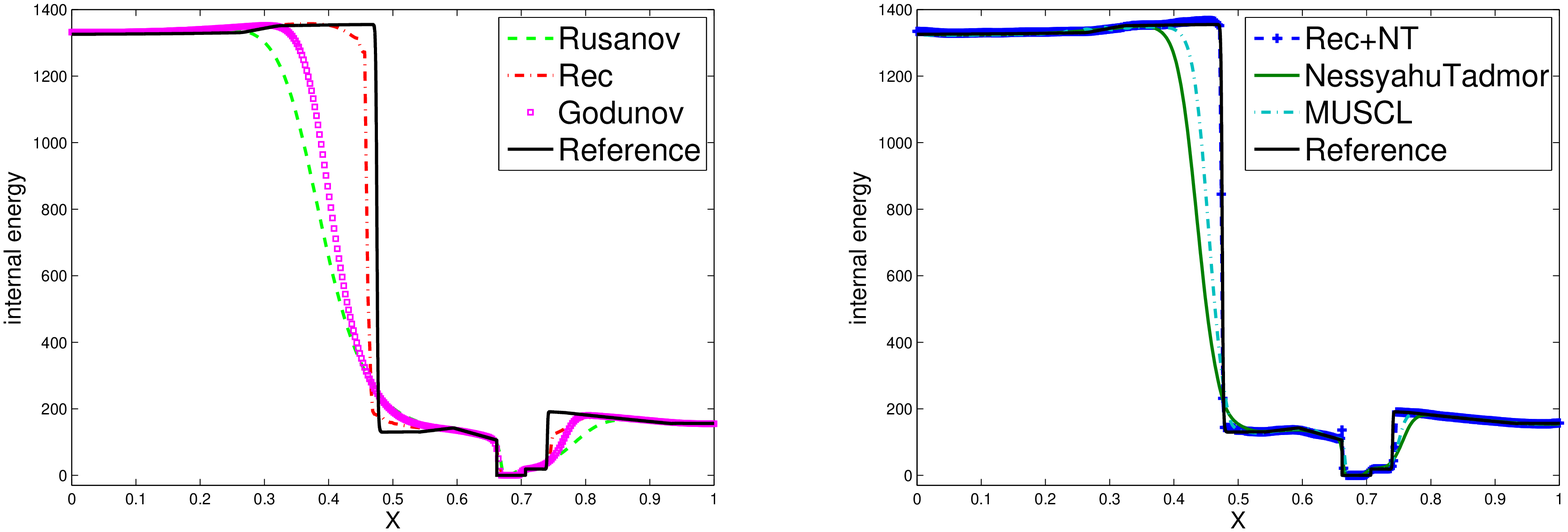}
\caption{From top to bottom, density, velocity and internal energy at time $0.026$ in the blast wave test case, with a mesh of $400$ cells.}  \label{FCW1}
\end{figure}

\begin{figure}[H]
\centering
\includegraphics[width=16cm]{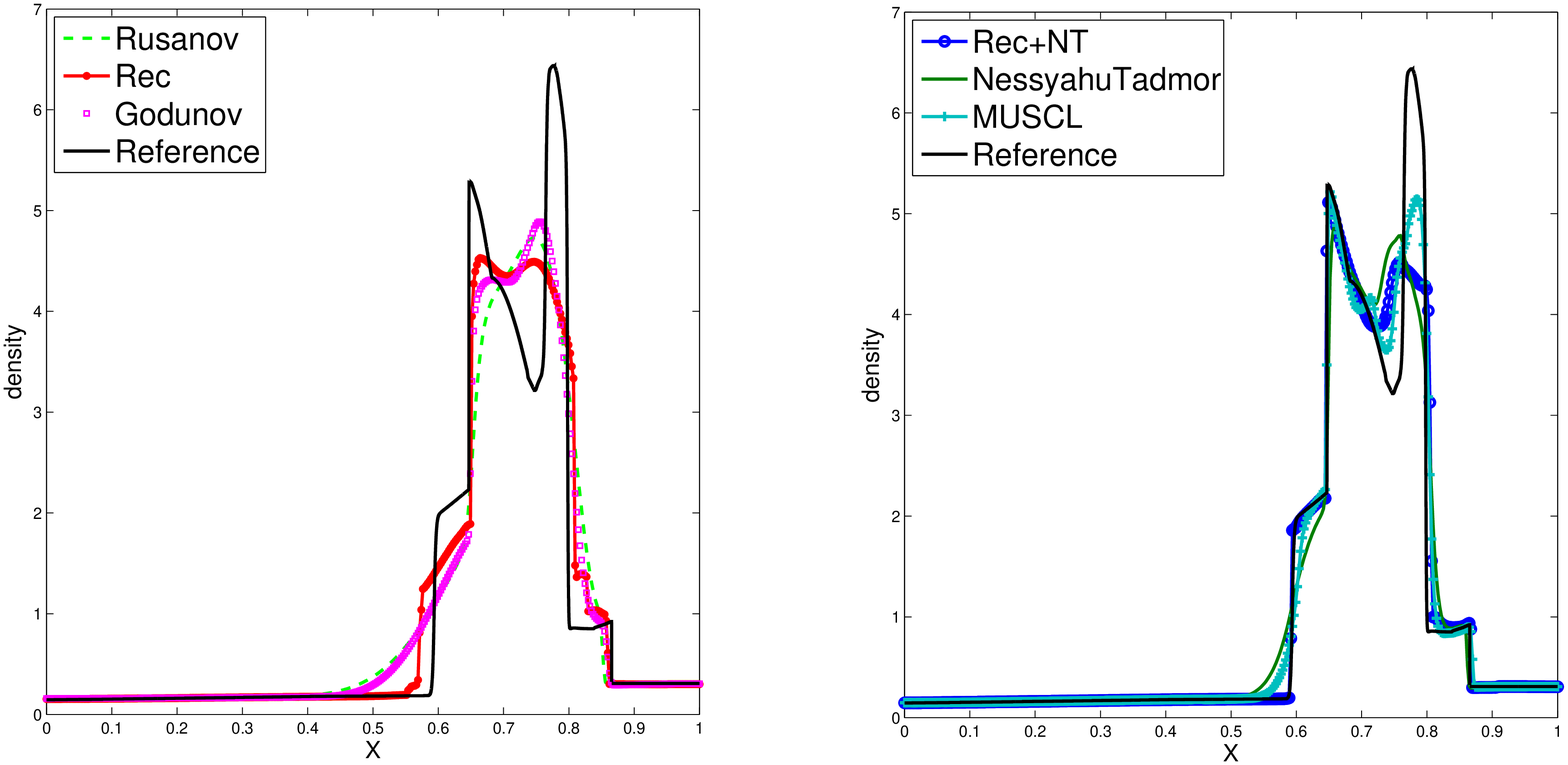}
\includegraphics[width=16cm]{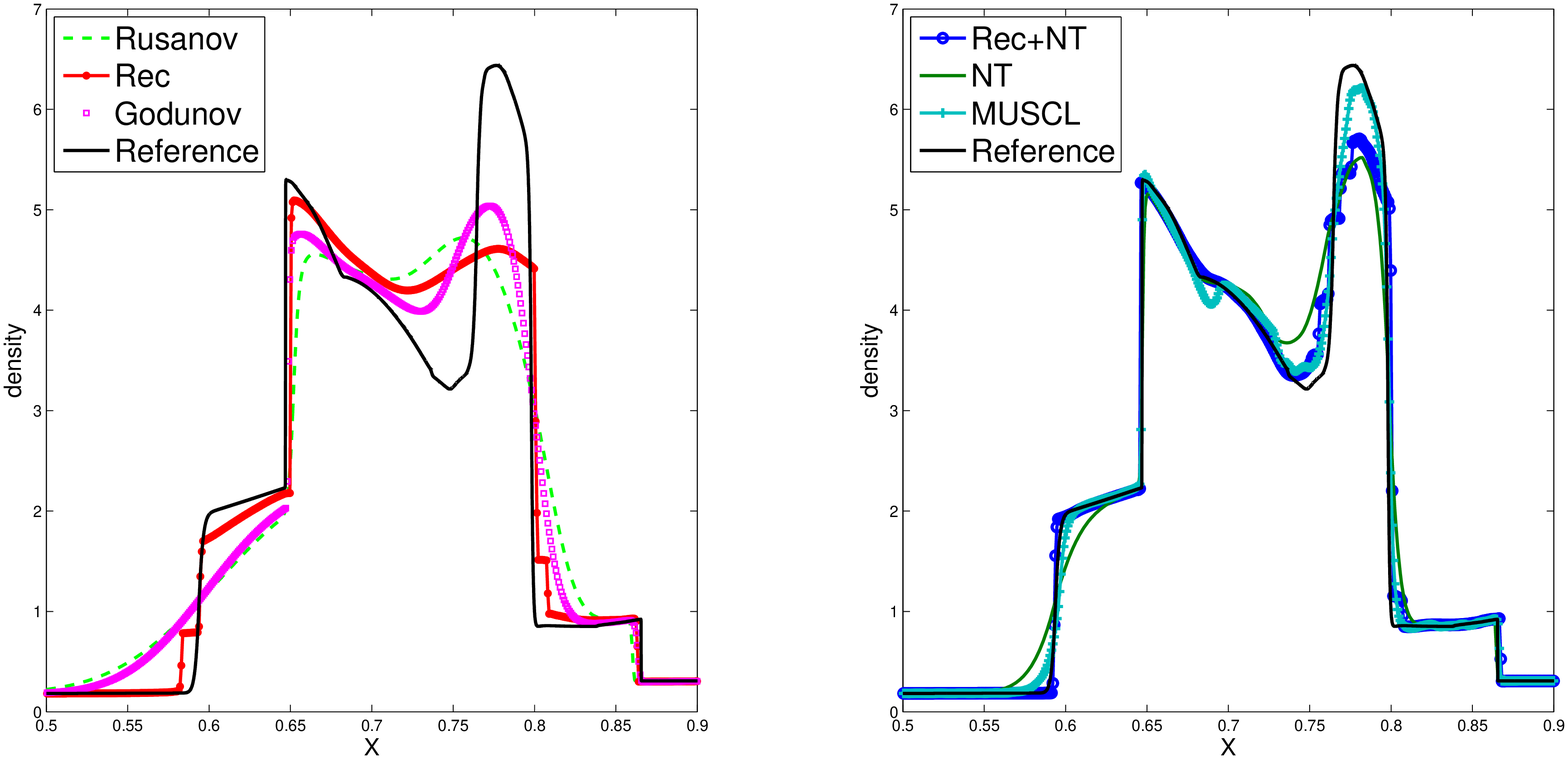}
\includegraphics[width=16cm]{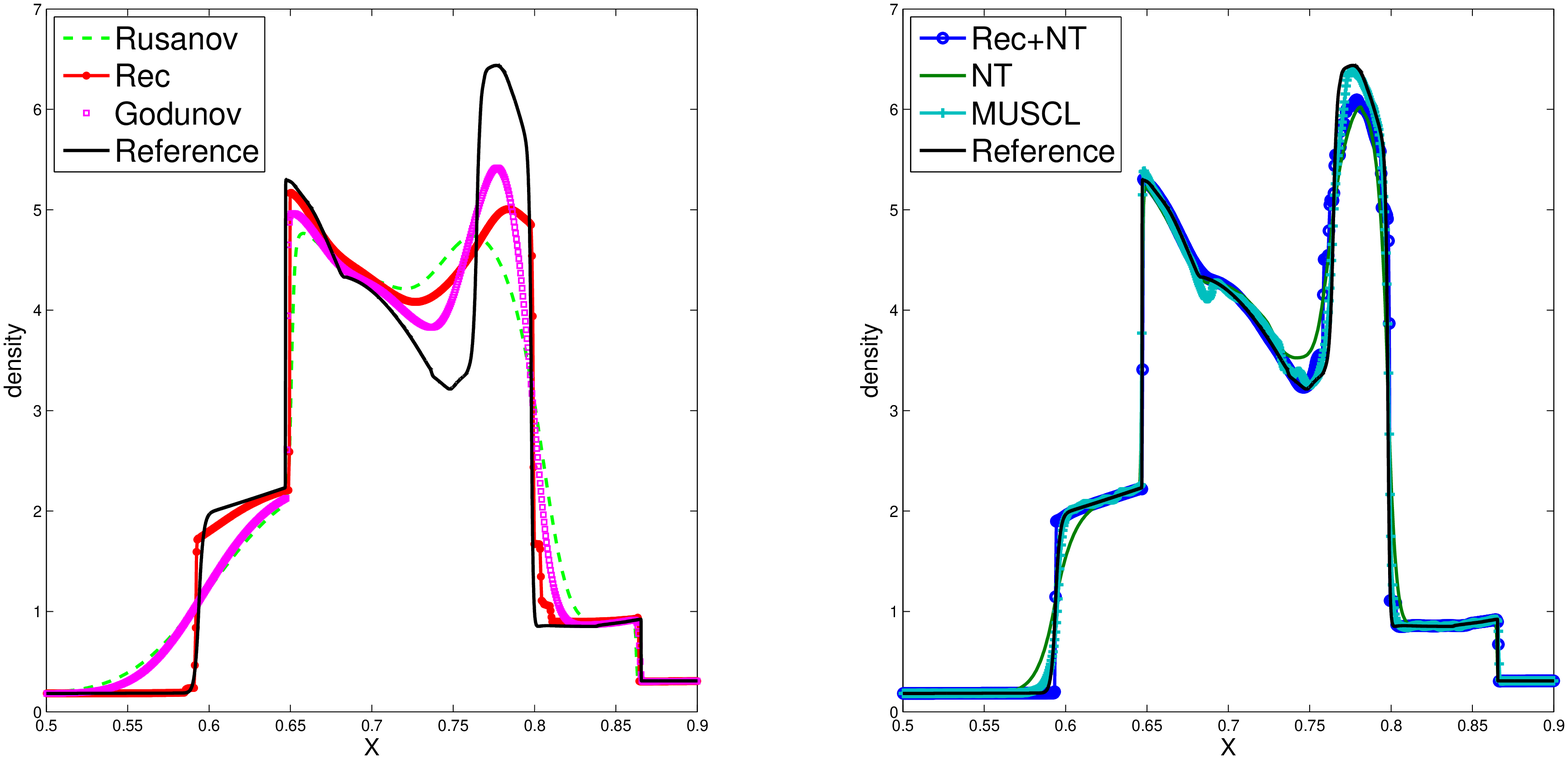}
\caption{Density at time $0.038$ in the blast wave test case, with a mesh of $400$ cells (top), $1 \, 200$ cells (middle) and $2 \, 000$ cells (bottom).}  \label{FCW2}
\end{figure}
\end{example} 
\begin{example} 
The Figure~\ref{FSine} presents the results at time $1.8$ of an entropy satisfying shock interacting with a sine wave, presented for example in~\cite{LW03}. The mesh has $400$ cells, the CFL number is $0.45$ and the initial datum is:
$$ 
\begin{cases}
 \rho^{0}(x)= 3.897143* \mathbf{1}_{x<-4}+(1+0.2\sin(5x)) \mathbf{1}_{x\geq-4} \, ; \\
u^{0}(x)= 2.629369* \mathbf{1}_{x<-4} \, ;\\
p^{0}(x)= 10.33333* \mathbf{1}_{x<-4}+ \mathbf{1}_{x\geq-4} \, .
\end{cases}
$$
The reference solution is, once again, the result given by the Nessyahu-Tadmor scheme with $30 \,  000$ cells and a CFL number of $0.48$.
\begin{figure}[H]
\centering
\includegraphics[width=16cm]{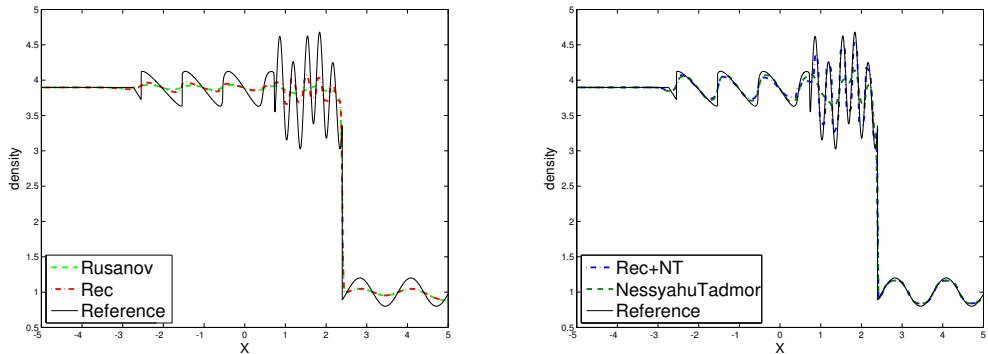}
\caption{Density at time $1.8$ for the shock entropy wave interaction}  \label{FSine}
\end{figure}
The high frequency oscillations are sharply captured.
\end{example} 

\begin{example} 
On Figure~\ref{FSMSGD}, the test case consists in a Riemann problem containing a slowly moving $3$-shock. We plot the momentum for a mesh of $800$ cells and a CFL number of $0.3$.
More precisely, we have
$$ \rho_{L}=3.86  \ u_{L}= -0.81 \ p_{L}= 10.33 \ \ \text{ and } \rho_{R}= 1.05 \ u_{R}= -3.44 \ p_{R}=1.05  \, .$$
This test case is a slight perturbation of the pure slowly moving shock introduced by~\cite{Q94}, where $\rho_{R}=p_{R}=1$.
Once again, no spurious oscillation appears. 
\begin{figure}[H]
\centering
\includegraphics[width=16cm]{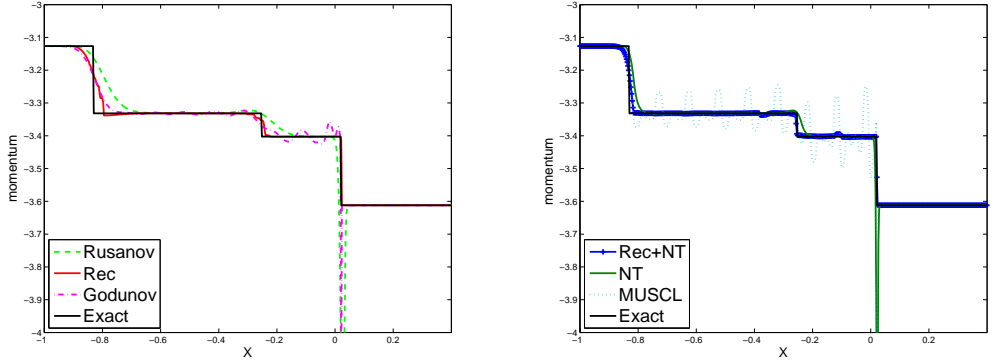}
\caption{Momentum at time $0.3$ for a Riemann problem with a slowly moving shock.}  \label{FSMSGD}
\end{figure}
\end{example} 

\begin{example} 
Another nice feature of the reconstruction scheme is that is seems to drastically diminish the wall heating phenomenon. It occurs when a shock reflects on a solid wall, and takes the form of a hollow in a density, or a spike in the temperature, near the wall. We tested our scheme on two cases considered by Donat and Marquinat in~\cite{DM96}, who proposed a cure by interlacing two schemes. For those two tests only, $\gamma=5/3$. The first case is a Riemann problem developing two symmetric shocks. The initial datum is
$$ \rho_{L}=1,  \ u_{L}= 4, \ p_{L}= 1 \ \ \text{ and } \rho_{R}= 1, \ u_{R}= -4, \ p_{R}=1 \, .$$
 The simulation is running with $200$ cells and a CFL number of $0.4$. The results, shown on Figure~\ref{FDM1}, show that the wall heating phenomenon is drastically diminish with the reconstruction scheme.
 The second test is the reflection of a gas of density $1$, pressure $0.001$ and velocity $1$ on a solid wall on its right. On Figure~\ref{FDM2} is a zoom around the wall, and we can clearly see the wall heating phenomenon and the resulting spurious oscillations for the Godunov's and MUSCL's schemes, and the good behavior of the reconstruction scheme. We took a CFL number of $0.45$ and $1 \, 000$ cells.
\begin{figure}[H]
\centering
\includegraphics[width=16cm]{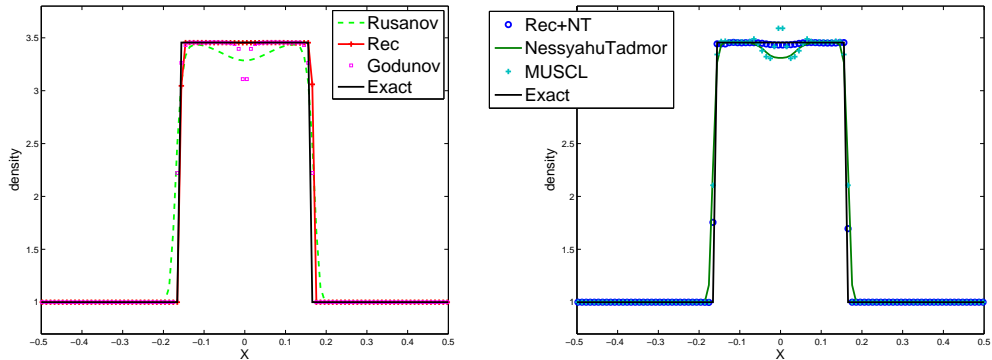}
\caption{Density and internal energy at time $0.1$ for a Riemann problem with two symmetric shocks.}  \label{FDM1}
\end{figure}
\begin{figure}[H]
\centering
\includegraphics[width=16cm]{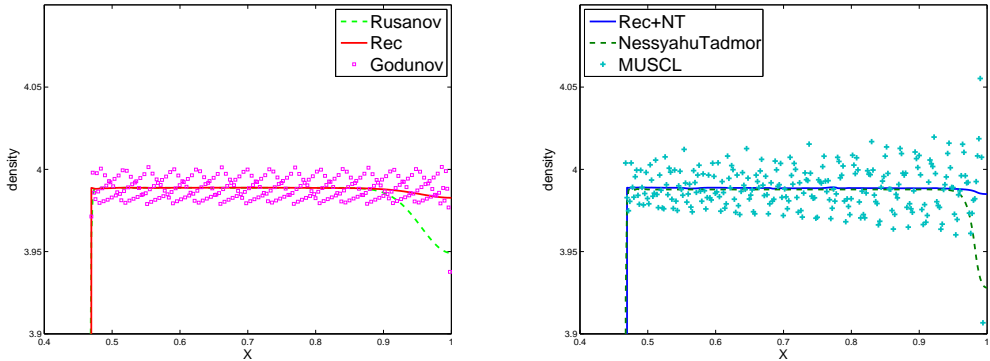}
\caption{Density and internal energy at time $1.6$ of a fluid reflecting on a solid wall.}  \label{FDM2}
\end{figure}
\end{example} 
\section{Conclusions and perspectives}
In this paper, we presented a finite volume scheme in which the mean value inside each cell is replaced, whenever it is possible, by a single shock. The fluxes are computed by letting these discontinuities evolved during the time step. For convex scalar conservation laws and for the barotropic Euler equation, we proved that the scheme is exact on pure shocks. In the latest case, we use a robust criterion to detect shocks and decide which wave should be reconstruct. The lack of such a criterion for the full gas dynamics explains why we added extra conditions in that case. However, numerical results are encouraging, especially on the problematic test cases of slowly moving shocks and shock reflections. Indeed, the spurious oscillations in the momentum and the hollow in the density are eliminated. Our main perspectives are first to find a more suitable criterion on the full gas dynamics, and second to use an approximate Riemann solver.

\phantomsection
\addcontentsline{toc}{section}{References} 
\nocite{*}
\bibliographystyle{alpha}
\bibliography{biblioReconstruction}

\end{document}